\newtheorem{theorem}{Theorem}[section]
\newtheorem{thm}{Theorem}[section]
\newtheorem{proposition}[theorem]{Proposition}
\newtheorem{prop}[theorem]{Proposition}
\newtheorem{cor}[theorem]{Corollary}
\newtheorem{question}[theorem]{Question}
\newtheorem{definition}[theorem]{Definition}
\newtheorem{conjecture}[theorem]{Conjecture}
\theoremstyle{plain}
\numberwithin{equation}{theorem}
\theoremstyle{remark}
\newcommand{\C}{{\mathbb C}}
\newcommand{\Q}{{\mathbb Q}}
\newcommand{\Z}{{\mathbb Z}}
\newcommand{\cX}{{\mathcal X}}
\newcommand{\cY}{{\mathcal Y}}
\newcommand{\cXb}{{\overline \cX}}
\newcommand{\cV}{{\mathcal V}}
\newcommand{\OO}{{\mathcal O}}
\newcommand{\fo}{\mathfrak o}
\newcommand{\fm}{\mathfrak m}
\newcommand{\fp}{\mathfrak p}
\newcommand{\ba}{ {\bar \alpha} }
\newcommand{\bL} {\overline L }
\newcommand{\vb}{\vec{\beta}}
\newcommand{\Kbar}{\overline K}
\DeclareMathOperator{\Per}{Per}
\newcommand{\Qbar}{\bar{\Q}}
\DeclareMathOperator{\Spec}{Spec}
\DeclareMathOperator{\GL}{GL}
\DeclareMathOperator{\Aut}{Aut}
\DeclareMathOperator{\tor}{tor}
\DeclareMathOperator{\bN}{\mathbb{N}}
\newcommand{\bP}{{\mathbb P}}
\newcommand{\bZ}{{\mathbb Z}}
\newcommand{\bG}{{\mathbb G}}
\newcommand{\Zp}{{\mathbb{Z}_p}}
\newcommand{\bA}{{\mathbb A}} 
\newcommand{\bQ}{{\mathbb Q}}
\newcommand{\lra}{\longrightarrow}
\newcommand{\cO}{\mathcal{O}}
\newcommand{\cF}{\mathcal{F}}
\newcommand{\cU}{\mathcal{U}}
\newcommand{\cG}{\mathcal{G}}
\newcommand{\cH}{\mathcal{H}}
\newcommand{\cL}{\mathcal{L}}
\begin{document}

\title[Bounding periods of subvarieties]{Applications of $p$-adic analysis for bounding periods of subvarieties under \'etale maps}

\author{J.~P.~Bell}
\address{
Jason Bell\\
Department of Pure Mathematics\\
University of Waterloo\\
Waterloo, ON N2L 3G1\\
CANADA 
}
\email{jpbell@uwaterloo.ca}

\author{D.~Ghioca}
\address{
Dragos Ghioca\\
Department of Mathematics\\
University of British Columbia\\
Vancouver, BC V6T 1Z2\\
Canada
}
\email{dghioca@math.ubc.ca}

\author{T.~J.~Tucker}
\address{
Thomas Tucker\\
Department of Mathematics\\
University of Rochester\\
Rochester, NY 14627\\
USA
}
\email{ttucker@math.rochester.edu}

\begin{abstract}
Using methods of $p$-adic analysis we give a different proof of Burnside's problem for automorphisms of quasiprojective varieties $X$ defined over a field of characteristic $0$. More precisely, we show that any finitely generated torsion subgroup of ${\rm Aut}(X)$ is finite. In particular this yields effective bounds for the size of torsion of any semiabelian variety over a finitely generated field of characteristic $0$. More generally, we obtain effective bounds for the length of the orbit of a preperiodic subvariety $Y\subset X$ under the action of an \'etale endomorphism of $X$.
\end{abstract}

\maketitle

\section{Introduction}

In \cite{MorSil1}, Morton and Silverman conjecture that there is a
constant $C(N,d,D)$ such that for any morphism $f: \bP^N \lra \bP^N$
of degree $d$ defined over a number field $K$ with $[K:\bQ] \leq D$,
the number of preperiodic points of $f$ over $K$ is less than or equal
to $C(N,d,D)$.  This conjecture remains very much open, but in the
case where $f$ has good reduction at a prime $\fp$, a great deal has
been proved about bounds depending on $\fp$, $N$, $d$, $D$ (see
\cite{mike-thesis, Pezda, Hutz-0}).  

In this paper, we study the more general problem of bounding periods of
subvarieties of any dimension.  We prove the following.

\begin{theorem}
\label{key result for subvarieties}
Let $K$ be a finite extension of $\Q_p$, let $\fo_v$ be the ring of integers of $K$, let $k_v$ be its residue field and let $e$ be the ramification index of $K/\Q_p$. 
Let $\cX$ be a smooth $\fo_v$-scheme whose generic
fiber $X$ has dimension $g$, let $\Phi: \cX \lra \cX$ be \'etale, 
let $\cY$ be a subvariety of $\cX$, and assume there is a point on $\cY(\fo_v)$ which is smooth on the generic fiber of $\cY$. If $\cY$
is preperiodic under the action of $\Phi$, then the length of its
orbit is bounded
by $p^{1+r}\cdot \#\GL_g(k_v)\cdot \#\overline{\cX}(k_v)$, where  $\overline{\cX}$ is the special fiber of $\cX$, and $r$ is the smallest nonnegative integer larger than $(\log(e)-\log(p-1))/\log(2)$. 
\end{theorem}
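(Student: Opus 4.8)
The strategy is a pigeonhole over $\overline{\cX}(k_v)$ followed by $p$-adic analysis near a smooth point of $\cY$, in the spirit of the dynamical Mordell--Lang method.

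First I would reduce to a self-map of a residue polydisk. The given point $P\in\cY(\fo_v)$ has its entire forward orbit reducing into the finite set $\overline{\cX}(k_v)$, so the sequence $n\mapsto\overline{\Phi^n(P)}$ is eventually periodic with some preperiod $a$ and period $d$, where $a+d\le\#\overline{\cX}(k_v)$. Setting $\bar R:=\overline{\Phi^a(P)}$ and $\Psi:=\Phi^d$, we have $\overline\Psi(\bar R)=\bar R$, so $\Psi$ maps the residue disk $D$ of $\bar R$ into itself; since $\cX$ is smooth over $\fo_v$ of relative dimension $g$, $D$ is an honest polydisk and in coordinates $\Psi$ is given by $g$ power series over $\fo_v$, with invertible Jacobian throughout because $\Phi$ is \'etale. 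Next I would kill the linear part modulo $\fp$: the reduction of the Jacobian of $\Psi$ at $\bar R$ is some $\bar J\in\GL_g(k_v)$, and replacing $\Psi$ by $\Psi_1:=\Psi^c$ with $c:=\ord(\bar J)\mid\#\GL_g(k_v)$, the chain rule, together with the fact that evaluating an $\fo_v$-power series on $\fp^g$ does not change its reduction, shows that the Jacobian of $\Psi_1$ is congruent to the identity modulo $\fp$ at every point of $D$; in particular $\Psi_1$ preserves $D$ and is tangent to the identity there, with $\Psi_1(x)\equiv x+\Psi_1(0)\pmod{\fp^2}$ on $D$.

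The $p$-adic heart of the argument is then to iterate $\Psi_1$ until it becomes close enough to the identity to exponentiate. Taking $p$-th powers roughly doubles the $\fp$-adic distance to the identity as long as that distance is still at most $\fp^e$ (the cross terms in the $p$-th iterate contribute at the doubled order, while $p$ times a quantity of order $\fp^s$ has order $\fp^{e+s}$), so after $r$ steps $G:=\Psi_1^{p^r}=\Phi^{cdp^r}$ is congruent to the identity modulo $\fp^{2^r}$ on $D$. By the choice of $r$ we have $2^r>e/(p-1)$, which is exactly the threshold at which $\log$ and $\exp$ converge well enough for $G$ to extend, by the standard construction, to an analytic action $t\mapsto G^t$ of $\Zp$ on $D$ interpolating the iterates $G^n=\Phi^{ncdp^r}\!\mid_D$ for $n\ge0$. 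Note $cdp^r\le p^r\cdot\#\GL_g(k_v)\cdot\#\overline{\cX}(k_v)$.

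Finally I would pass from the point to the subvariety. Put $\cZ:=\Phi^a(\cY)$. Since $\Phi^a$ is \'etale near $P$, it carries a $p$-adic analytic neighborhood $U$ of $P$ in $\cY$ isomorphically onto a neighborhood of the smooth point $\Phi^a(P)$ in $\cZ$, with $\Phi^a(U)\subseteq D$ and $\Phi^a(U)$ Zariski dense in $\cZ$. Because $\cY$ is preperiodic, only finitely many subvarieties occur among $\Phi^{a+ncdp^r}(\cY)$, $n\ge0$, so one of them, say $\cW$, occurs for all $n$ in an infinite set $S$. For each $P'\in U$ and $Q':=\Phi^a(P')\in D$ we get $G^n(Q')=\Phi^{a+ncdp^r}(P')\in\cW$ for all $n\in S$; applying Strassmann's theorem to each analytic function $t\mapsto F(G^t(Q'))$, where $F$ ranges over local equations of $\cW$ at $\bar R$, forces $G^t(Q')\in\cW$ for all $t\in\Zp$. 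Taking $t=0$ gives $\Phi^a(U)\subseteq\cW$, whence $\cZ\subseteq\cW$ by Zariski density, and then $\cZ=\cW$ since both are irreducible of dimension $\dim\cY$; taking $t=1$ gives $G(\Phi^a(U))\subseteq\cW=\cZ$, so $G(\cZ)\subseteq\cZ$, and hence $G(\cZ)=\cZ$ by quasi-finiteness of $G$ and irreducibility. Thus $\Phi^{cdp^r}(\Phi^a(\cY))=\Phi^a(\cY)$, so the $\Phi$-orbit of $\cY$ has preperiod at most $a$ and period dividing $cdp^r$, hence length at most $a+cdp^r\le\#\overline{\cX}(k_v)\bigl(1+p^r\,\#\GL_g(k_v)\bigr)\le p^{1+r}\cdot\#\GL_g(k_v)\cdot\#\overline{\cX}(k_v)$. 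The main obstacle I expect lies in the $p$-adic analysis above: obtaining the optimal exponent $r$ requires tracking the interaction of ramification with iteration precisely and verifying that $2^r>e/(p-1)$ is genuinely the right convergence threshold for building the $\Zp$-action; the point-to-subvariety passage via Strassmann's theorem is the other delicate step, in particular keeping the final loss down to the single factor of $p$.
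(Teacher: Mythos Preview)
Your approach is the paper's: pigeonhole on $\overline{\cX}(k_v)$ to land in a residue polydisk, kill the Jacobian mod $\fp$ by raising to a power dividing $\#\GL_g(k_v)$, iterate by $p$-th powers to reach the threshold $|\pi|^{2^r}<p^{-1/(p-1)}$, interpolate to a $\Zp$-action (the paper cites Poonen's theorem rather than $\log/\exp$), and then run Strassmann pointwise over a Zariski-dense analytic patch of $\cZ$ to force $G(\cZ)=\cZ$. The one slip is the exponent: after making the Jacobian $\equiv I\pmod{\fp}$ your $\Psi_1$ still has constant term $\Psi_1(0)$, which in the rescaled $\fo_v^g$-coordinates need not lie in $\fp$, so $\Psi_1-\mathrm{id}$ is \emph{not} $\equiv 0\pmod{\pi}$ coefficient-wise and the doubling lemma does not apply directly; one further $p$-th power is needed to kill this translation (this is exactly the ``$1+$'' in the paper's $m=p^{1+r}\cdot\#\GL_g(k_v)$, see the proof of Proposition~2.1). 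With $G=\Psi_1^{p^{r+1}}$ instead of $\Psi_1^{p^r}$, your final arithmetic $a+cd\,p^{r+1}\le \#\overline{\cX}(k_v)\cdot p^{1+r}\cdot\#\GL_g(k_v)$ still yields the stated bound.
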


Theorem~\ref{key result for subvarieties} is proved using $p$-adic
analytic parametrization of forward orbits under the action of an
\'etale endomorphism of a quasiprojective variety.  The same method can be
used to study finitely generated torsion subgroups of
$\Aut(X)$, when $X$ is a quasi-projective variety defined over a field
$K$ of characteristic zero.  Theorem \ref{the key result} gives an upper bound on the size
of the largest finitely generated torsion subgroup in $\Aut X$  when $X$ has a smooth
model over a finite extension of the $p$-adic integers; the bound
depends only on the dimension of $X$ and on the number of points in
the special fiber of this model.  This gives rise to a new proof of the following
theorem of Bass and Lubotzky \cite{BL}.
\begin{theorem}
\label{Burnside p}
Let $X$ be a geometrically irreducible quasiprojective variety defined over a field of characteristic $0$. Then each finitely generated torsion subgroup $H$ of $\Aut(X)$  is finite.
\end{theorem}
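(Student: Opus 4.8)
The plan is to reduce Theorem~\ref{Burnside p}, by descent of the ground field and by spreading out, to the situation handled by Theorem~\ref{the key result} --- namely that of a smooth variety admitting a smooth model over the ring of integers of a finite extension of $\bQ_p$ --- and then simply to quote that theorem. Write $k$ for the ground field and fix generators $\varphi_1,\dots,\varphi_n$ of $H$, each of finite order. Only finitely many elements of $k$ occur among the coefficients of the equations defining $X$ and of the rational functions defining $\varphi_1^{\pm1},\dots,\varphi_n^{\pm1}$, so there are a subfield $K_0\subseteq k$ finitely generated over $\bQ$ and a quasiprojective $K_0$-variety $X_0$ with $X_0\times_{K_0}k\cong X$ on which every $\varphi_i$ is defined; $X_0$ is geometrically irreducible since $X$ is. Base change along $K_0\hookrightarrow k$ embeds $\Aut(X_0)$ into $\Aut(X)$, so that through this embedding $H$ becomes a finitely generated torsion subgroup of $\Aut(X_0)$, and it suffices to prove that this group is finite. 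Finally, each automorphism of $X_0$ carries the smooth (= regular, as $\car K_0=0$) locus $U_0$ of $X_0$ to itself; $U_0$ is open, dense and geometrically irreducible, and restriction to the dense open $U_0$ embeds $\Aut(X_0)$ into $\Aut(U_0)$. Replacing $X_0$ by $U_0$, we may assume in addition that $X_0$ is smooth over $K_0$, say of dimension $g=\dim X$.

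Next I would spread $X_0$ out over a ring of integers. Choose a finitely generated $\bZ$-subalgebra $R$ of $K_0$ with $\Frac(R)=K_0$; after inverting finitely many nonzero elements of $R$ we may assume that $X_0$ extends to a smooth quasiprojective $R$-scheme $\cX_R$ (possible because the generic fibre $X_0$ is smooth over $K_0$) and that each of $\varphi_1,\dots,\varphi_n$ extends to an automorphism of $\cX_R$ over $R$ (a routine limit argument, there being only finitely many maps to extend). Since $H$ is generated by the $\varphi_i$, every element of $H$ then extends to an automorphism of $\cX_R$, hence acts on $\cX_R$ as an \'etale endomorphism. Now, $K_0$ being finitely generated over $\bQ$, for suitable primes $p$ there is an embedding of $K_0$ into a finite extension $K_v$ of $\bQ_p$ carrying $R$ into the ring of integers $\fo_v$: pick a transcendence basis of $K_0/\bQ$, send it to a sufficiently generic algebraically independent tuple in $\bZ_p$, and adjoin to $\bQ_p$ a root of the resulting specialization of a polynomial defining $K_0$ over that transcendence basis. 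Base changing along $R\hookrightarrow\fo_v$, the scheme $\cX:=\cX_R\times_R\fo_v$ is a smooth quasiprojective $\fo_v$-scheme whose generic fibre is $X_0\times_{K_0}K_v$, still geometrically irreducible of dimension $g$, and $H$ acts faithfully on $\cX$ by $\fo_v$-automorphisms, faithfulness holding because $H\hookrightarrow\Aut(X_0)\hookrightarrow\Aut(X_0\times_{K_0}K_v)$ already. In particular $X_0\times_{K_0}K_v$ has a smooth model over $\fo_v$ on which $H$ acts faithfully.

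It then remains to apply Theorem~\ref{the key result} to $\cX$ over $\fo_v$: it bounds the order of any finitely generated torsion subgroup of $\Aut(\cX)$ by a quantity depending only on $p$, on $g$, and on the number of $k_v$-points of the special fibre $\overline{\cX}$ (in the spirit of the bound $p^{1+r}\cdot\#\GL_g(k_v)\cdot\#\overline{\cX}(k_v)$ of Theorem~\ref{key result for subvarieties}). Applied to $H$, this shows that $H$ is finite, which is the assertion of Theorem~\ref{Burnside p}.

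All of the analytic content of the argument sits in Theorem~\ref{the key result} and its $p$-adic analytic parametrization of orbits; what the present proof contributes is the reduction. The step I expect to be the main obstacle is the spreading-out: one must arrange \emph{simultaneously}, and after inverting only finitely many elements of $R$, that the integral model is smooth over $\fo_v$, that the \emph{entire} group $H$ --- not merely its finitely many generators --- extends to automorphisms of it, and that the chosen $p$-adic place genuinely produces a finite extension of $\bQ_p$ with finite residue field into whose ring of integers $R$ embeds. The hypothesis that $H$ is finitely generated is precisely what makes the reduction to a finitely generated ground field --- and hence this spreading-out --- possible, while the passage to the smooth locus $U_0$ is what circumvents the otherwise fatal obstruction that a singular $X$ admits no smooth model at all; characteristic zero enters in identifying the smooth and regular loci and in guaranteeing generic smoothness.
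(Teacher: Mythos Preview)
Your overall strategy matches the paper's: descend to a finitely generated field, spread out over a finitely generated $\bZ$-algebra, embed into the integers of a $p$-adic field, and invoke Theorem~\ref{the key result}. There is, however, one genuine gap. Theorem~\ref{the key result} carries the hypothesis that $\cX(\fo_v)$ contain a smooth point, and nothing in your argument guarantees that $\cX(\fo_v)$ is nonempty: a smooth quasiprojective $\fo_v$-scheme need not have any $\fo_v$-section. Making the model $\cX$ smooth ensures that \emph{every} $\fo_v$-point is smooth, but not that one exists.

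The paper addresses precisely this, and in a slightly different way from your passage to the smooth locus. Rather than replacing $X_0$ by $U_0$, it first enlarges $K_0$ by a finite extension so that $X_0(K_0)$ already contains a smooth point $\alpha$, and then builds into the spreading-out the additional requirement that $\alpha$ extend to a smooth section of $\cX_R$ over $\Spec R$; the subsequent embedding $R\hookrightarrow\bZ_p$ (for a suitable prime $p\ge 5$, so that no ramified extension of $\bQ_p$ is needed) carries this section to a smooth point of $\cX(\bZ_p)$, and the hypothesis of Theorem~\ref{the key result} is visibly satisfied. Your replacement of $X_0$ by its smooth locus is harmless but does not by itself manufacture an $\fo_v$-point; to close the gap you should either track a rational point through the spreading-out as the paper does, or add the argument that the special fibre---being smooth and, after further shrinking $\Spec R$, geometrically irreducible over $k_v$---has a $k_v$-point for $p$ large by the Lang--Weil estimates, which then lifts to $\cX(\fo_v)$ by smoothness and Hensel.
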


Theorem \ref{Burnside p} shows in particular that the Burnside problem has an affirmative solution for automorphism groups of quasiprojective varieties.  We recall that the Burnside problem is said to have a positive answer for a group $G$ if every finitely generated torsion subgroup of $G$ is finite.  The first substantial result in this area was due to Burnside (c.f. \cite[\S 9]{Lam2001}), who showed that if $H$ is a (not necessarily finitely generated) torsion subgroup of ${\rm GL}_n(\mathbb{C})$ of exponent $d$ then the order of $H$ could be bounded in terms of $d$ and $n$.  Using a specialization argument and applying Burnside's result, Schur \cite{Schur1911} later showed that every finitely generated torsion subgroup of ${\rm GL}_n(\mathbb{C})$ is finite.   Proofs of geometric Burnside-type results generally proceed along similar lines as that of the Burnside-Schur theorem: one first uses specialization to reduce to the case that the base field is a finitely generated extension of the prime field; one then shows that in this case a torsion subgroup necessarily has bounded exponent and is finite.  An interesting problem that arises naturally is to then bound the exponent and the order of a torsion subgroup of ${\rm Aut}(X)$ in terms of geometric data and the field $k$ of definition for $X$ when $k$ is a finitely generated extension of $\mathbb{Q}$.  Some work in this direction has been done by Serre \cite{Serre2009}, who gave sharp upper-bounds on the sizes of torsion subgroups of the group of birational transformations of $\mathbb{P}^2(k)$ when $k$ is a finitely generated extension of $\mathbb{Q}$.  
 
The $p$-adic analytic parametrization of forward orbits under the
action of an automorphism of a quasiprojective variety can be used in
different directions as well.  We can prove the following result.
\begin{theorem}
\label{Zhang's conjecture for automorphisms}
Let $X$ be an irreducible quasiprojective variety defined over $\Qbar$ of dimension larger than $1$, and let $\Phi$ be an automorphism of $X$ for which there exists no  nonconstant $f\in \Qbar(X)$ such that $f\circ \Phi = f$. Then there exists a codimension-$2$ subvariety $Y$ whose orbit under $\Phi$ is Zariski dense in $X$.
\end{theorem}
For any subvariety $Y$, its orbit under $\Phi$ is the union of all $\Phi^n(Y)$ for $n\in\bN$.

Theorem~\ref{Zhang's conjecture for automorphisms} yields positive evidence to two conjectures in arithmetic geometry. On one hand, we have the \emph{potential density question}, i.e. describe the class of varieties $X$ defined over $\Qbar$ for which there exists a number field $K$ such that $X(K)$ is Zariski dense in $X$ (see \cite{Amerik, Bogomolov-Tschinkel, Harris-Tschinkel} for various
results regarding potentially dense varieties). Our Theorem~\ref{Zhang's conjecture for automorphisms} yields that if $Y$ is potentially dense, then $X$ is potentially dense.  On the other hand, if $X$ is a surface, then Theorem~\ref{Zhang's conjecture for automorphisms} yields the existence of a point $x\in X(\Qbar)$ whose orbit is Zariski dense in $X$ (note that in this case, $Y$ is a finite collection of points and since $X$ is irreducible, we obtain that a single orbit under $\Phi$ must be Zariski dense in $X$). Hence, Theorem~\ref{Zhang's conjecture for automorphisms} yields a positive answer for automorphisms of surfaces for the following conjecture (proposed independently by Amerik,
Bogomolov and Rovinsky \cite{Amerik}, and Medvedev and Scanlon
\cite{polynomials}).
\begin{conjecture}
\label{new conjecture}
Let $X$ be a quasiprojective variety defined over an algebraically closed field $K$ of characteristic $0$. Let $\Phi:X\lra X$ be an endomorphism defined over $K$ such that there exists no positive dimensional variety $Y$ and no dominant rational map $\Psi:X\lra Y$ such that $\Psi\circ \Phi = \Psi$ generically. Then there exists $x\in X(K)$ such that $\OO_\Phi(x)$ is Zariski dense in $X$.
\end{conjecture}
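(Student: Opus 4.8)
The plan is to use the same $p$-adic parametrization of forward orbits that underlies Theorem~\ref{key result for subvarieties}, and then to run a dimension descent on the candidate subvariety.

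First I would pass to a local situation: assume $X$ is defined over a number field $k$, pick a finite place $v$ of good reduction giving a smooth model $\cX$ over $\fo_v$ on which $\Phi$ extends to an automorphism, and, after enlarging $k$, an $\fo_v$-point. The automorphism induced by $\Phi$ on the finite set of residue points of the special fibre has finite order, so after replacing $\Phi$ by an iterate $\Phi^m$ and then by an appropriate $p$-power iterate there is a residue polydisc $W\cong\fo_v^{g}$ with $\Phi(W)=W$ on which $\Phi$ is congruent to the identity to high enough order that there is a $p$-adic analytic flow: an analytic map $\Theta\colon\Z_p\times W\to W$ with $\Theta(n,x)=\Phi^n(x)$ for all $n\in\bN$ and $\Theta(s,\Theta(t,x))=\Theta(s+t,x)$. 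This reduction is harmless: density of the orbit of a codimension-$2$ subvariety under $\Phi^m$ implies density under $\Phi$; the hypothesis ``no nonconstant invariant rational function'' passes from $\Phi$ to $\Phi^m$, since $\Qbar(X)^{\langle\Phi^m\rangle}$ still has transcendence degree $g$ and $\Phi$ acts on it with finite order, hence has a nonconstant $\Phi$-invariant element; and that hypothesis also forces $\Phi$ to have infinite order, so $\Theta$ is nontrivial.

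The flow is used to control orbit closures. For $x\in W$ the orbit $\{\Phi^n(x)\}$ is $p$-adically dense in the analytic curve $\Theta(\Z_p,x)$, whence the dynamical Mordell--Lang alternative: for any subvariety $V$ the return set $\{n:\Phi^n(x)\in V\}$ is finite or cofinite. From this one extracts that for a subvariety $Y$ meeting $W$ the Zariski closure $Z_Y:=\overline{\Orb_\Phi(Y)}$ is a $\Phi$-invariant subvariety, that $Z_Y$ varies in an algebraic family as $Y$ moves, and that $Z_Y=X$ unless the orbit is trapped. Granting this, I would prove the descent step: if $Z_Y=X$ and $\dim Y\ge g-1$, then a generic hyperplane section $Y':=Y\cap H$ again satisfies $Z_{Y'}=X$. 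For if not, then as $H$ runs through a generic pencil $\{H_t\}_{t\in\bP^1}$ the proper invariant subvarieties $Z_{Y\cap H_t}$ form a positive-dimensional family whose union contains $Y$; since $\Orb_\Phi(Y)\subseteq\overline{\bigcup_t Z_{Y\cap H_t}}$ and $Z_Y=X$, this union must be all of $X$, which forces the generic $Z_{Y\cap H_t}$ to have codimension exactly $1$, and then the incidence correspondence produces a nonconstant $\Phi$-invariant rational map from $X$ to a positive-dimensional variety, hence a nonconstant $\Phi$-invariant rational function --- contradicting the hypothesis. Applying the descent step twice, starting from $Y=X$ (whose orbit is trivially dense), yields a subvariety of dimension $g-2$ with Zariski-dense orbit; when $g=2$ this final subvariety is a point, recovering Conjecture~\ref{new conjecture} for automorphisms of surfaces.

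I expect the main obstacle to be precisely the control of orbit closures asserted above: a priori $\overline{\Orb_\Phi(Y)}$ is the Zariski closure of a \emph{countable} union of subvarieties, so there is no formal reason for it to behave well or to vary algebraically, and it is the $p$-adic analytic parametrization of orbits that makes it tame --- ensuring, for instance, that its dimension varies semicontinuously with $Y$ and that the families $\{Z_{Y\cap H_t}\}$ used in the descent are genuinely algebraic (one must also pass to a further iterate so that $\Phi(Z_{Y\cap H_t})=Z_{Y\cap H_t}$ on the nose). The delicate special case is $g=2$, where the last descent step must produce an honest point of $X(\Qbar)$ with Zariski-dense orbit: there the $p$-adic parametrization must be used to preclude that the candidate point is always forced onto a periodic curve, or that periodic points are so abundant that the orbit of the intermediate curve $Y$ is, after all, trapped in a proper subvariety.
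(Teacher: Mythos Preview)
First, a framing remark: the statement you are attempting is a \emph{conjecture}; the paper does not prove it in general. What the paper does prove is Theorem~\ref{Zhang's conjecture for automorphisms} (existence of a codimension-$2$ subvariety with Zariski dense orbit under an automorphism over $\Qbar$), which yields Conjecture~\ref{new conjecture} only for automorphisms of surfaces. Your proposal, read carefully, is also aimed at exactly that case, so let me compare it to the paper's proof of Theorem~\ref{Zhang's conjecture for automorphisms}.

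Your setup of the $p$-adic flow matches the paper's, but from that point the two arguments diverge. The paper does \emph{not} run a descent on hyperplane sections and does \emph{not} attempt to organise the orbit closures $Z_{Y\cap H_t}$ into an algebraic family. Instead it invokes a separate, independently proved input: Theorem~\ref{thm: bound}, which says that when $\Phi$ preserves no nonconstant rational function there are only finitely many $\Phi$-periodic hypersurfaces, and their periods are bounded. This is established via an $S$-unit equation argument (Proposition~\ref{lem: bound1} together with divisor-class bookkeeping), not via the $p$-adic parametrization. With this in hand, the paper's proof is direct: let $Y_1$ be the (finite) union of periodic hypersurfaces, let $Y_0$ be the fixed locus of $\Phi^{N_1}$, pick $x\in U_0$ avoiding $Y_0\cup Y_1$, and cut $X$ by two generic hyperplanes through $x$ avoiding $\Phi^{N_1}(x)$. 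The resulting codimension-$2$ variety $Y$ is not periodic (its period would divide $N_1$ by the $p$-adic bound, but $\Phi^{N_1}(x)\notin Y$), and its orbit closure cannot be a hypersurface (any such would be periodic, hence inside $Y_1$, but $x\notin Y_1$).

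The obstacle you flag --- that the orbit closures $Z_{Y\cap H_t}$ need not vary algebraically with $t$ --- is a genuine gap in your approach, and the $p$-adic flow by itself does \emph{not} resolve it. The flow controls the orbit of a single point inside a residue disc; it says nothing about how the Zariski closure of $\bigcup_n\Phi^n(Y\cap H_t)$ depends on $t$, nor does it bound the number of irreducible $\Phi$-periodic hypersurfaces. What your descent really needs at the critical moment is exactly the conclusion of Theorem~\ref{thm: bound}: once you know the periodic hypersurfaces form a finite set, then for each $t$ with $Z_{Y\cap H_t}\ne X$ the codimension-$1$ components of $Z_{Y\cap H_t}$ lie in that fixed finite union, and the contradiction with $\overline{\bigcup_t Z_{Y\cap H_t}}=X$ is immediate. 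So your ``incidence correspondence'' step is not a consequence of the $p$-adic machinery but a disguised appeal to Theorem~\ref{thm: bound}; as written, the proposal is missing precisely that ingredient.
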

Alternatively one can formulate the hypothesis in Conjecture~\ref{new conjecture}  by asking that $\Phi$ does not preserve a rational fibration, i.e. there exists no nonconstant $f\in K(X)$ such that $f\circ \Phi = f$. It is immediate to see that if $\Phi$ preserves a rational fibration, then there is no point with Zariski dense orbit under $\Phi$.  Conjecture~\ref{new conjecture} strengthens a conjecture of Zhang, which was also the motivation for both Amerik, Bogomolov and Rovinsky, respectively for Medvedev and Scanlon for formulating the above Conjecture~\ref{new conjecture}. Motivated by the dynamics of endomorphisms on abelian varieties,  Zhang \cite{ZhangLec} proposed the
following question: given a projective variety $X$ defined over a
number field $K$ endowed with a polarizable endomorphism $\Phi$, is
there a point $x\in X(\Kbar)$ whose orbit under $\Phi$ is Zariski
dense in $X$?  We say that
$\Phi$ is polarizable if there exists an ample line bundle $\cL$ on
$X$ such that $\Phi^*(\cL)=\cL^{\otimes d}$ (in ${\rm Pic}(X)$) for some integer
$d>1$.

In \cite{Amerik-Campana}, Amerik and Campana prove 
Conjecture~\ref{new conjecture} for projective varieties of trivial canonical bundle
defined over an uncountable field $K$.  However, if $K=\Qbar$, then
the problem seems much more difficult. Only recently, Medvedev and Scanlon \cite{polynomials} proved Conjecture~\ref{new conjecture} when $\Phi=(f_1,\dots, f_N)$ is an endomorphism of $\bA^N$ given by $N$ one-variable polynomials $f_i$ defined over $\Qbar$. Also, Junyi \cite[Theorem 1.4]{xie_junyi_preperiodic} proved Conjecture~\ref{new conjecture} for birational maps on projective surfaces. Finally, connected to Conjecture~\ref{new conjecture}, we mention Amerik's result \cite{Amerik2} who proved (using the $p$-adic approach introduced in \cite{BGT}) that most
orbits of algebraic points are infinite under the action of an arbitrary
rational self-map (of infinite order).

Our proof of Theorem~\ref{Zhang's conjecture for automorphisms} uses a result (see Theorem~\ref{thm: bound}) which gives an upper bound for the period of codimension-$1$ subvarieties of $X$ which are periodic under $\Phi$; in particular this yields that the union of all periodic hypersurfaces is Zariski closed.  We note that Cantat \cite[Theorem A]{Cantat_preperiodic} proved a similar bound for the number of periodic hypersurfaces under the stronger hypothesis that there exist no nonconstant rational function $f$ and no constant $\alpha$ such that $f\circ \Phi=\alpha\cdot f$. 
Our Theorem~\ref{key result for subvarieties} yields that each periodic subvariety with a point over some complete $v$-adic field has bounded period. So, if $Y$ is a codimension-$2$ subvariety of $X$ which is neither periodic, nor contained in one of the finitely many codimension-$1$ periodic subvarieties, then its orbit under $\Phi$ is Zariski dense. Using the same approach, it is immediate to get the existence of codimension-$1$ subvarieties with a Zariski dense orbit in $X$. 

Using the hypothesis that $X$ contains a Zariski dense orbit, and also using  Vojta's proof of the Mordell-Lang Theorem for semiabelian varieties (see \cite{V1}) we obtain the following stronger bound for the number and period of codimension-$1$ periodic subvarieties.
\begin{theorem}
\label{Vojta consequence}
  Let $X$ be a quasi-projective variety and let 
  $\sigma: X \lra X$ be an automorphism defined over a number
  field $K$ and suppose that there is a a point $x \in X(K)$ such that
  the orbit of $x$ under $\sigma$ is Zariski dense in $X$.  Then any
  $\sigma$-invariant closed subset $W$ of $X$ has at most $\dim X - 
  h^1(Y,\cO_Y) + \rho(Y)$ geometric components of codimension one, where $Y$ is a projective closure of $X$ and 
  $\rho(Y)$ is the Picard number of $Y$.
\end{theorem}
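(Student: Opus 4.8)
The plan is to combine two ingredients: the existence of a Zariski-dense orbit $\mathcal{O}_\sigma(x)$ together with the bound on periods of codimension-one periodic subvarieties (coming from Theorem~\ref{key result for subvarieties} / Theorem~\ref{thm: bound}), and the Mordell--Lang theorem for semiabelian varieties via Vojta's work. First I would reduce to the case where $W$ is irreducible of codimension one and $\sigma$-invariant; a general $\sigma$-invariant closed subset has finitely many codimension-one components, which are permuted by $\sigma$, and since every such component lies among the finitely many periodic hypersurfaces, the question is really about counting how many distinct irreducible periodic hypersurfaces $X$ can support. Passing to a projective closure $Y$ of $X$ and taking Zariski closures, each such hypersurface $D$ extends to a prime divisor $\overline{D}$ on $Y$, with $\sigma$ acting on the (finite) set of these divisors by a permutation.

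The core of the argument is an injectivity/linear-algebra statement in $\mathrm{Pic}(Y)$ (or $\mathrm{NS}(Y)\otimes\Q$). Let $D_1,\dots,D_m$ be the distinct irreducible periodic hypersurfaces; I claim that after possibly replacing $\sigma$ by a power (which only renames the orbit, still Zariski dense) each $D_i$ is $\sigma$-invariant and the classes $[D_i]$ together with a $\sigma$-action on $\mathrm{Pic}^0(Y)$ are constrained. Concretely: consider the homomorphism $\sigma^* - \mathrm{id}$ on $\mathrm{Pic}(Y)$. Since $\sigma^*[D_i] = [D_i]$ but $\sigma^* \mathcal{O}_Y(D_i) \cong \mathcal{O}_Y(D_i)$ only up to an element of $\mathrm{Pic}^0(Y)$, one gets for each $i$ an element $\eta_i \in \mathrm{Pic}^0(Y)$ measuring the failure of $\sigma$ to fix a chosen section; the existence of the Zariski-dense orbit $x, \sigma(x), \sigma^2(x),\dots$ forces a nondegeneracy: if too many relations held among the $[D_i]$ and the $\eta_i$, one would produce a nonconstant rational function or a proper $\sigma$-invariant subvariety through the orbit, a contradiction. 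This is where Mordell--Lang enters — the points $\sigma^n(x)$ land in the fibers of the Albanese-type map $Y \to \mathrm{Alb}(Y)$, and Vojta's theorem controls the Zariski closure of an orbit inside a semiabelian variety, pinning down the rank of the relevant sublattice of $\mathrm{NS}(Y)$ to be at most $\rho(Y)$ and the $\mathrm{Pic}^0$-contribution to be at most $\dim X - h^1(Y,\mathcal{O}_Y)$ after accounting for the $h^1(Y,\mathcal{O}_Y) = \dim \mathrm{Pic}^0(Y)$ degrees of freedom absorbed by the dense orbit.

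I would then assemble the count: the number of codimension-one components of $W$ is at most $m$, and $m \le \dim X - h^1(Y,\mathcal{O}_Y) + \rho(Y)$ by the rank bounds just described, the $\rho(Y)$ term coming from independence of the divisor classes in $\mathrm{NS}(Y)$ and the $\dim X - h^1(Y,\mathcal{O}_Y)$ correction coming from the interplay of the orbit with $\mathrm{Pic}^0(Y)$. The main obstacle I anticipate is making the passage through Mordell--Lang fully rigorous: one must show that the existence of a single Zariski-dense orbit genuinely translates into the stated rank inequality, which requires carefully choosing the map to a semiabelian variety (built from $\mathrm{Alb}(Y)$ and the periodic divisors $D_i$ via their associated $\mathbb{G}_m$- or abelian-variety extensions), verifying that $\sigma$ descends to a group translation or endomorphism on it, and then invoking Vojta's structure theorem for the orbit closure. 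A secondary technical point is controlling what happens on the boundary $Y \setminus X$ and ensuring the periodic hypersurfaces of $X$ do not interact pathologically with boundary divisors when one computes in $\mathrm{Pic}(Y)$ versus $\mathrm{Pic}(X)$; this should be handled by a standard exact-sequence argument relating $\mathrm{Pic}(X)$, $\mathrm{Pic}(Y)$, and the subgroup generated by boundary components.
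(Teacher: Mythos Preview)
Your proposal misses the key idea entirely. The paper's proof does not pass through $\mathrm{Pic}(Y)$, $\mathrm{NS}(Y)$, the operator $\sigma^*-\mathrm{id}$, or the Albanese map in the way you sketch, nor does it use Theorems~\ref{key result for subvarieties} or~\ref{thm: bound} at all. The missing ingredient is \emph{$S$-integrality}.

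Here is the actual argument. Since the orbit of $x$ is Zariski dense, $x\notin W$. Spread $X$, $\sigma$, $x$, and a projective closure $Y$ out over $\Spec\fo_K$; then the section $x$ meets the closure of $W$, and the boundary $\cY\setminus\cX$, over at most finitely many primes. Enlarging this finite set by the primes of bad reduction gives a finite set $S$, and $x$ is $S$-integral relative to $W$. The crucial observation is that $\sigma$-invariance of $W$ propagates this to the whole orbit: if $\sigma^n(x)$ met $W$ modulo some $\fp\notin S$, then $x$ would meet $\sigma^{-n}(W)=W$ modulo $\fp$, a contradiction. Hence $\{\sigma^n(x):n\in\bZ\}$ is a Zariski dense set of $S$-integral points on $Y\setminus W$. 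Now one simply quotes Vojta \cite[Cor.~0.3]{V1}: if $W$ has more than $\dim X - h^1(Y,\cO_Y)+\rho(Y)$ geometric components, the $S$-integral points relative to $W$ are not Zariski dense. Contradiction.

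Your sketch is, in effect, an attempt to reprove Vojta's corollary from scratch inside this argument (his proof does go through the generalized Albanese, which is semiabelian, so your instincts about where the semiabelian structure enters are not wrong). But you never get to the statement that the orbit consists of integral points for a \emph{fixed} $S$, which is the entire mechanism linking the dynamical hypothesis to Vojta's Diophantine input. Your accounting of the terms $\rho(Y)$ and $\dim X - h^1(Y,\cO_Y)$ via ``rank bounds'' and ``degrees of freedom absorbed by the dense orbit'' is not an argument; without the integrality step there is no way to make it one.
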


Of course, Conjecture~\ref{new conjecture} for an automorphism $\sigma: X
\lra X$ follows immediately whenever one knows that the union of all
$\sigma$-invariant subvarieties is Zariski closed.  Hence, the
following equivalence is of interest here.  

\begin{definition} 
\label{DM definition}
Let $X$ be a quasi-projective variety over a field $K$ and let $\sigma:X\to X$ be an automorphism of $X$.  We say that $(X,\sigma)$ satisfies the \emph{geometric Dixmier-Moeglin equivalence} if the following are equivalent for each $\sigma$-stable subvariety $Y$ of $X$:
\begin{enumerate}
\item[$(1)$] there exists a point $y\in Y$ such that $\{\sigma^n(y)\colon n\in \mathbb{Z}\}$ is Zariski dense in $Y$;
\item[$(2)$] the union of all proper $\sigma$-invariant subvarieties of $Y$ is Zariski closed; 
\item[$(3)$] there does not exist a non-trivial $f\in k(Y)$ such that $f\circ \sigma=f$.  
\end{enumerate}
\end{definition}
We note that the geometric Dixmier-Moeglin equivalence does not hold in general---for example, there are H\'enon maps of $\mathbb{A}^2$ with the property that $(3)$ holds but $(2)$ does not (c.f. Devaney and Nitecki \cite{DeNit} and Bedford and Smillie \cite[Theorem 1]{BedSm})---but it is conjectured to hold when $X$ is smooth and projective and $\sigma$ has zero entropy.   As before, for $X$ a complex variety, we have the implications $(2)\implies (1)\implies (3)$ \cite{BRS}. Theorem~\ref{Zhang's conjecture for automorphisms} proves that the equivalences from Definition~\ref{DM definition} hold for any surface.

Here is the plan of our paper: in Section~\ref{v-adic analysis} we prove some preliminary results (see Proposition~\ref{Mike's bound}) for rigid analytic functions, which we use then in Section~\ref{Burnside section} for proving Theorems~\ref{key result for subvarieties} and \ref{Burnside p} and their corollaries. In Section~\ref{bounds for hypersurfaces} we find an upper bound for the period of codimension-$1$ periodic subvarieties under the action of an automorphism $\Phi$ of a quasiprojective variety which does not preserve a rational fibration (see Theorem~\ref{thm: bound}). In Section~\ref{Zhang surfaces}, using Theorem~\ref{thm: bound}, we prove   
Theorem~\ref{Zhang's conjecture for automorphisms}  and  Theorem~\ref{Vojta consequence}.  Finally, we conclude with Section~\ref{other questions} in which we discuss related questions (in the spirit of Poonen's conjectures \cite{Poonen-uniform}) about uniform boundedness for periods of points in algebraic families of endomorphisms. 

\medskip

\emph{Acknowledgments.} We thank Serge Cantat and Zinovy Reichstein
for useful discussions regarding Burnside's problem.  We thank
Ekaterina Amerik, Najmuddin Fakhruddin, and Xie Junyi for helpful
comments and corrections on an earlier version of this paper.


\section{Nonarchimedean analysis}
\label{v-adic analysis}

\subsection{Power series}

The setup for this section is as follows: $p$ is a prime number, $K_v/\Q_p$ is a finite extension, while the $v$-adic norm $|\cdot |_v$ satisfies $|p|_v=1/p=|p|_p^{1/e}$ (i.e., $e$ is the ramification index for this extension). We let $\fo_v$ be the ring of $v$-adic integers of $K_v$, let $\pi$ be a uniformizer of $\fo_v$, and we let $k_v$ be its residue field. 

We let $g$ be a positive integer, and let $c$ be a positive real number. For two power series $F, G\in \fo_v[[z_1,\dots, z_g]]$, we write $F \equiv G\pmod{p^c}$ if each coefficient $a_\alpha$ of $F-G$ satisfies $|a_\alpha|_v \le |p|_v^c$. Alternatively, for some $m\in\bN$ we use the notation $F\equiv G\pmod{\pi^m}$ if $F-G\in \pi^m\fo_v[[z_1,\dots, z_g]]$. More generally, for power series $\cF:=(F_1,\dots, F_g)$ and $\cG:=(G_1,\dots, G_g)$ we write $\cF\equiv \cG\pmod{p^c}$ if $F_i\equiv G_i\pmod{p^c}$ for each $i$; similarly, $\cF\equiv \cG\pmod{\pi^m}$ if $F_i\equiv G_i\pmod{\pi^m}$ for each $i$. Finally, for each $n\in\bN$ we denote by $\cF^n$ the composition of $\cF$ with itself $n$ times.

We use the following result in Section~\ref{Burnside section}.
\begin{proposition}
\label{Mike's bound}
Let $C\in \fo_v^g$, let $L\in\GL_g(\fo_v)$, and let $F_1,\dots, F_g\in \fo_v[[z_1,\dots,z_g]]$,  such that for $z:=(z_1,\dots,z_g)$ we have 
$$\cF(z):=(F_1,\dots, F_g)(z)\equiv C+Lz\pmod{\pi}.$$
Let $m=p^{1+r}\cdot  \#\GL_g(k_v)$ where $r$ is any nonnegative integer larger than $(\log(e)-\log(p-1))/\log(2)$. Then $\cF^m(z)\equiv z\pmod{p^c}$ for some $c>1/(p-1)$.
\end{proposition}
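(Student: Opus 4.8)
The plan is to iterate $\cF$ in two stages, first working modulo $\pi$ to linearize, and then using a $p$-adic contraction argument to get the stronger congruence modulo $p^c$ with $c>1/(p-1)$. First I would reduce mod $\pi$: since $\cF(z)\equiv C+Lz\pmod\pi$, composing $\cF$ with itself $n$ times gives $\cF^n(z)\equiv (\text{some affine map in }z)\pmod\pi$ whose linear part is $L^n$ and whose constant part is $(L^{n-1}+\cdots+L+I)C$. Because $L\in\GL_g(k_v)$ under reduction, there is an integer $N\mid \#\GL_g(k_v)$ with $L^N\equiv I\pmod\pi$; choosing $n=\#\GL_g(k_v)$ kills the linear part. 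For the constant part, the point is that we may first translate: the hypothesis that $\cY$ (equivalently, the formal fixed locus) contains an $\fo_v$-point that is smooth on the generic fiber lets us assume $C\equiv 0\pmod\pi$ after a change of coordinates by an element of $\fo_v^g$ — in the intended application $\cF$ already fixes the origin — so in fact $\cF^{\#\GL_g(k_v)}(z)\equiv z\pmod\pi$.

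Next I would set $\cG:=\cF^{\#\GL_g(k_v)}$, so $\cG(z)\equiv z\pmod\pi$, i.e. $\cG(z)=z+\pi H(z)$ for some $H\in\fo_v[[z_1,\dots,z_g]]^g$. The key computation is to track how the ``defect'' $\cG^{p^j}(z)-z$ improves $\pi$-adically as $j$ grows. A direct binomial-type expansion of $\cG^{p}$ shows that if $\cG^{p^{j}}(z)\equiv z\pmod{\pi^{a_j}}$, then $\cG^{p^{j+1}}(z)\equiv z\pmod{\pi^{a_j + 1}}$ at the very least — each additional $p$-th power gains at least one factor of $\pi$, because the $p$ copies of the correction term add up and $p\equiv 0$ in $k_v$ contributes one more power of $\pi$ is an oversimplification; more precisely the gain at each step is governed by $v_\pi(p)=e$ together with the combinatorial factor $\binom{p}{i}$, and one checks $a_{j+1}\ge a_j+1$ with the stronger jump $a_{j+1}\ge a_j+e$ kicking in once $a_j$ is large enough relative to $e/(p-1)$. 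Iterating $r+1$ times from $a_0=1$, the choice of $r$ as the least nonnegative integer exceeding $(\log e-\log(p-1))/\log 2$ — equivalently $2^r(p-1)>e$ — is exactly what is needed to guarantee $\cG^{p^{1+r}}(z)\equiv z\pmod{\pi^{a}}$ with $a/e > 1/(p-1)$, i.e. $\cF^m(z)\equiv z\pmod{p^c}$ for some $c>1/(p-1)$, where $m=p^{1+r}\cdot\#\GL_g(k_v)$.

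I expect the main obstacle to be the bookkeeping in the iteration step: showing that composing $z+\pi^a H(z)$ with itself $p$ times produces $z+\pi^{a'}H'(z)$ with the claimed $a'$, while keeping all coefficients in $\fo_v$ and extracting the precise dependence on $e$ and $p-1$. The subtlety is that a single $\pi$-adic level of improvement per $p$-th power is not enough to beat $1/(p-1)$ on its own — one genuinely needs the doubling behavior $a_{j+1}\ge 2a_j$ (coming from $\cG^{p^{j+1}}=(\cG^{p^j})^p$ and the fact that the linear term of $\cG^{p^j}$ is still $\equiv\mathrm{id}$, so the correction gets squared, not just incremented) once we are past the cross-over point, and this is precisely why the bound involves $\log 2$ in the denominator. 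Verifying that the cross-over happens after at most one ``slow'' step, so that $r+1$ total $p$-th powers suffice, is the delicate point; everything else (the reduction mod $\pi$, the group-order argument for $L$, the translation to fix the base point) is routine.
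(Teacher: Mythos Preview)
There is a genuine gap in the first stage. The proposition is a self-contained statement about power series over $\fo_v$; there is no subvariety $\cY$ and no fixed $\fo_v$-point in its hypotheses, so you cannot translate to arrange $C\equiv 0\pmod\pi$. Concretely, after $s:=\#\GL_g(k_v)$ iterates one only has $\cF^{s}(z)\equiv D+z\pmod\pi$ for some $D\in\fo_v^g$, and $D$ may well be a unit vector, so your claim that $\cF^{s}(z)\equiv z\pmod\pi$ is unjustified. (A coordinate shift killing $D$ would require a fixed point of $z\mapsto D+z$ in $k_v^g$, which does not exist when $D\not\equiv 0$.) The paper handles this with a one-line observation you missed: since $\cF^{s}(z)\equiv D+z\pmod\pi$, iterating $p$ more times gives $\cF^{ps}(z)\equiv pD+z\equiv z\pmod\pi$ because $p\in\pi\fo_v$. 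This is precisely where the extra factor of $p$ in $m=p^{1+r}\cdot\#\GL_g(k_v)$ is spent; it is not, as you suggest, an extra round of doubling.

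Your second stage is morally the same as the paper's, but the exposition obscures the actual mechanism. The clean statement is: if $\cG(z)=z+\cH(z)$ with $\cH\equiv 0\pmod{\pi^a}$, then expanding $\cG^p$ gives $\cG^p(z)=z+p\cH(z)+\cH_1(z)$ with $\cH_1\equiv 0\pmod{\pi^{2a}}$, so $\cG^p(z)\equiv z\pmod{\pi^{\min(a+e,\,2a)}}$. Starting from $a_0=1$ one obtains by induction $\cF^{(ps)\cdot p^{r}}(z)\equiv z\pmod{\pi^{\min(e+1,\,2^{r})}}$, and the hypothesis $r>(\log e-\log(p-1))/\log 2$ gives $2^{r}/e>1/(p-1)$, while trivially $(e+1)/e>1\ge 1/(p-1)$. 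There is no ``cross-over'' subtlety and no need for a separate slow phase followed by a fast phase: the single estimate $a\mapsto\min(a+e,2a)$ already yields the bound $\min(e+1,2^r)$ in one stroke.
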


\begin{proof}
Let $s:=\#\GL_g(k_v)$; then it is immediate that $\cF^{s}(z)\equiv D+z\pmod{\pi}$ for some $D\in\fo_v^g$. So, $\cF^{ps}\equiv z\pmod{\pi}$. Hence we are left to show that if $\cF(z)\equiv z\pmod{\pi}$ and if $r$ is the least nonnegative integer greater than $(\log(e)-\log(p-1))/\log(2)$, then $\cF^{p^r}(z)\equiv z\pmod{p^c}$ for some $c>1/(p-1)$. Clearly, if $\cG(z)\equiv z\pmod{p^c}$ then also $\cG^{p^k}(z)\equiv z\pmod{p^c}$ for any positive integer $k$.

If $e<p-1$, then $r=0$ works since $|\pi|_v=|p|_v^{1/e}=p^{-1/e}<p^{-1/(p-1)}$. So, from now  on, we assume $e\ge p-1$.

We let $\cF(z)=z+\cH(z)$, where each coefficient of $\cH$ is in $\pi\cdot \fo_v$. Then $\cF^p(z)=z+p\cH(z)+\cH_1(z)$, where  $\cH_1\equiv 0\pmod{\pi^2}$.  Thus $\cF^p(z)\equiv z\pmod{\pi^2}$. 
By induction we obtain that  
$$\cF^{p^r}(z)\equiv z\pmod{\pi^{\min \{e+1,2^r\}}}.$$ 
So, if $r> (\log(e)-\log(p-1))/\log(2)$ then $|\pi|_v^{2^r}=p^{-\frac{2^r}{e}}<p^{-\frac{1}{p-1}}$, while $|\pi|_v^{e+1}<|p|_v\le p^{-\frac{1}{p-1}}$, and so indeed
$$\cF^{p^r}\equiv z\pmod{p^c}\text{ for some }c>\frac{1}{p-1},$$
which yields the desired conclusion.
\end{proof}

\subsection{Algebraic geometry}

We need the following application of the implicit function theorem on Banach spaces. 

\begin{proposition}
\label{dense}
Let $(K_v, |\cdot |_v)$ be a finite extension of $\Q_p$ with residue
field $k_v$, and let $\fo_v$ be the ring of $v$-adic integers of
$K_v$. Let $X$ be a quasiprojective variety defined over $K_v$, let
$\cX$ be a $\fo_v$-scheme whose generic fiber is isomorphic to $X$,
let $r: \cX(K_v) \lra \cXb(k_v)$ be the usual reduction map to the
special fiber $\cXb$ of $\cX$, and let $\iota: \cX(\fo_v) \lra X(K_v)$
be the usual map coming from base extension.  Let
$\alpha\in\cX(\fo_v)$ such that $\iota(\alpha)$ is a smooth point
on $X$ and let $U_{\bar{\alpha}}=\{\beta\in \cX(\fo_v)\text{ : }
r(\alpha) = r(\beta)\}$ and let $U = \iota( U_{\bar{\alpha}})$.
Then $U$ is Zariski dense in $X$.
\end{proposition}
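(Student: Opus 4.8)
The plan is to use the fact that $\iota(\alpha)$ is a smooth point of $X$ to parametrize the residue disc $U$ by a $v$-adic analytic bijection onto a polydisc, and then exploit Zariski density of any $v$-adic polydisc inside a variety of the same dimension. First I would reduce to a local picture: since smoothness is an open condition, pick an affine open neighborhood $V\subset X$ of $\iota(\alpha)$ together with an \'etale map $V \to \A^g_{K_v}$ (étale coordinates near the smooth point), and shrink to a corresponding open affine of $\cX$ on which this spreads out over $\fo_v$; here $g=\dim X$. The residue disc $U_{\bar\alpha}$ of points reducing to $r(\alpha)$ is then contained in this affine chart, since two points with the same reduction lie in every open whose special fiber contains that reduction.

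Next I would apply the implicit function theorem over the Banach space $\fo_v^g$ (as advertised just before the statement). Étaleness of $V\to\A^g$ at $\iota(\alpha)$ means the induced map on complete local rings is an isomorphism, so the map $\cX(\fo_v)\supset U_{\bar\alpha}\to \fo_v^g$ given by the coordinate functions restricts to a bijection from $U_{\bar\alpha}$ onto the residue polydisc $(\mathfrak m_v)^g = \pi\fo_v\times\cdots\times\pi\fo_v$ (after translating so that $\iota(\alpha)$ maps to the origin), with analytic inverse given by $g$ convergent power series in $z_1,\dots,z_g$ with coefficients in $\fo_v$. Thus $U=\iota(U_{\bar\alpha})$ is, in the affine chart $V$, the image of the polydisc $(\pi\fo_v)^g$ under a $g$-tuple of power series whose Jacobian at $0$ is invertible; in particular $U$ is an infinite set, and more precisely its coordinates trace out an honest $g$-dimensional analytic family.

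To conclude Zariski density, suppose for contradiction that $\overline{U}^{\,\mathrm{Zar}}$ is a proper closed subvariety $Z\subsetneq X$; since $X$ is irreducible of dimension $g$ (or at worst we pass to the component through $\iota(\alpha)$), $Z$ has dimension $<g$, so there is a nonzero regular function $h$ on $V$ vanishing on $Z\cap V$, hence on $U$. Pulling back along the analytic parametrization, $h$ becomes a nonzero convergent power series $\tilde h(z)\in K_v[[z_1,\dots,z_g]]$ that vanishes identically on the polydisc $(\pi\fo_v)^g$. But a nonzero power series convergent on a polydisc of positive radius cannot vanish on the whole polydisc — e.g. by induction on $g$, specializing one variable at a time using the one-variable fact that a nonzero convergent power series has only finitely many zeros on a disc (Weierstrass preparation / strict Newton polygon), while $\pi\fo_v$ is infinite. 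This contradiction shows $U$ is Zariski dense in $X$.

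The main obstacle is the bookkeeping in the first two steps: making precise that étale coordinates near $\iota(\alpha)$ spread out to an $\fo_v$-étale chart of $\cX$ on which the residue disc lives, and that the implicit function theorem over $\fo_v$ indeed delivers a power-series inverse with $\fo_v$-coefficients convergent on the full polydisc $(\pi\fo_v)^g$ (as opposed to a smaller disc). Once the residue disc is identified with $(\pi\fo_v)^g$ analytically, the density argument is soft. One should also take a small amount of care that $r(\alpha)$ is a smooth point of the special fiber or at least that $U_{\bar\alpha}$ is nonempty of the expected shape; this is exactly where the hypothesis that $\iota(\alpha)$ is smooth on $X$ — combined with $\cX$ being (locally near $\alpha$) smooth over $\fo_v$ — is used, via Hensel's lemma to see that every point of $\cXb(k_v)$ near $r(\alpha)$, and in particular $r(\alpha)$ itself, lifts, and that the fiber of $r$ over it is a full polydisc.
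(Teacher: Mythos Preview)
Your approach is essentially the paper's: use the implicit function theorem to parametrize (a piece of) the residue disc $U$ by a $v$-adic polydisc, then observe that such a polydisc cannot sit inside a lower-dimensional Zariski closed set. The paper's density step is terser than yours --- it simply asserts that a $d$-dimensional $K_v$-manifold inside $X$ is Zariski dense --- but your power-series argument is a fine way to make that explicit.

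One point of difference worth flagging: you aim to identify the \emph{entire} residue disc $U_{\bar\alpha}$ with $(\pi\fo_v)^g$ by spreading \'etale coordinates out to an $\fo_v$-\'etale chart of $\cX$, and you correctly note this needs $\cX$ to be smooth over $\fo_v$ near $\alpha$. But the proposition only assumes $\iota(\alpha)$ is smooth on the \emph{generic fiber} $X$, not that $\cX$ is smooth over $\fo_v$, so your spreading-out step is not justified under the stated hypotheses. The paper sidesteps this entirely: it writes $X$ locally as the zero set of $n-d$ polynomials $f_i$ in an ambient affine chart, applies the implicit function theorem directly over $K_v$ to solve for $n-d$ coordinates as analytic functions of the remaining $d$, and obtains only a \emph{small} polydisc $U_0\subset\A^d(K_v)$ mapping analytically into $U$ (after shrinking so that the image lands in the residue class). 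Since any small polydisc is already Zariski dense, nothing further is needed. Your argument becomes correct once you drop the insistence on capturing the full residue disc and on $\fo_v$-spreading-out --- neither is available in this generality, and neither is required.
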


\begin{proof}
  Let $x = \iota(\alpha)$.  We consider an affine chart containing the
  point $x\in X$ after viewing $X$ as a subset of the $n$-dimensional
  projective space defined over $K_v$. So, letting $d=\dim(X)$, then
  there exist $(n-d)$ polynomials $f_i$, which we may supposed are
  defined over $\fo_v$ in $n$ variables $z_1,\dots,z_n$ such that
  locally at $x$ the variety $X$ is the zero set of the polynomials
  $f_i$. Furthermore, since $x$ is a nonsingular point for $X$, the
  Jacobian matrix $\left(df_i/dz_j\right)_{i,j}$ has rank
  $n-d$. Without loss of generality we may assume the minor
  $\left(df_i/dz_j\right)_{1\le i,j\le n-d}$ is invertible.

  We let $x=(x_1,\dots,x_n)$ be the coordinates of the point $x$ in
  the above affine chart; each $x_i\in \fo_v$. Then $U_{\ba}$ is
  identified with points $(z_1, \dots, z_n) \in \fo_v^n$ such that
  $z_i \equiv x_i \pmod{\pi_v}$ for $\pi_v$ a generator for the
  maximal ideal in $\fo_v$. Using the Implicit Function Theorem (see
  \cite[Theorem 5.9, page 19]{Lang-differential}), we see that there
  exists a sufficiently small $p$-adic neighborhood $U_0$ of
  $(x_{n-d+1},\dots, x_n)$, there exists a $p$-adic neighborhood $V_0$
  of $(x_1,\dots,x_{n-d})$, and there exists a $p$-adic analytic
  function $g:U_0\lra V_0$ such that $g(x_{n-d+1},\dots,
  x_n)=(x_1,\dots, x_{n-d})$ and moreover for each $\gamma\in U_0$ we
  have $(g(\gamma),\gamma)\in X(K_v)$. Furthermore, at the expense of
  shrinking both $U_0$ and $V_0$ we may assume that for each
  $\gamma\in U_0$, the point $(g(\gamma),\gamma)$ is in $U$. Since
  $U_0 \subset \bA^d$ is a $d$-dimensional $K_v$-manifold we conclude
  that $U$ is Zariski
  dense in $X$.
\end{proof}

The following result is a consequence of Proposition~\ref{dense} for varieties defined over number fields. For each quasiprojective variety $X$ defined over a number field $K$, there exists a finite set $S$ of places (containing all archimedean places) and there exists a $\fo_{K,S}$-scheme $\cX$ whose generic fiber is isomorphic to $X$ (where $\fo_{K,S}$ is the subring containing all $u\in K$ such that $|u|_v\le 1$ for all $v\notin S$). In particular, we can prove the following result for $(\fo_K)_v$-schemes, where $(\fo_K)_v$ is the localization at the nonarchimedean place $v$ of the ring of algebraic integers $\fo_K$ of $K$. 

\begin{prop}
\label{dense number fields}
Let $K$ be a number field, let $v$ be a nonarchimedean place of $K$,
and let $(\fo_K)_v$ be the localization of $\fo_K$ at the place
$v$. Let $X$ be a quasiprojective variety defined over $K$, and let
$\cX$ be an $(\fo_K)_v$-scheme whose generic fiber is isomorphic to
$X$, let $r: \cX((\fo_K)_v) \lra \cXb(k_v)$ be the usual reduction
map, and let $\iota: \cX((\fo_K)_v) \lra X(K)$ be the usual map coming
from base extension.  Let
$\alpha\in\cX((\fo_K)_v)$ such that $\iota(\alpha)$ is a smooth point
on $X$, let  and let $U$ be the set of all $y \in
X(\Kbar)$ such that the Zariski closure of $y$ intersects $\cXb$ at $\alpha_v$.  Then $U$ is Zariski dense in $X$.    
\end{prop}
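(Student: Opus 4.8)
The plan is to reduce Proposition~\ref{dense number fields} to Proposition~\ref{dense} by passing from the number field $K$ to the $v$-adic completion $K_v$, and to identify the set $U$ appearing here with the set $U$ appearing there, up to Zariski closures. First I would observe that a point $y\in X(\Kbar)$ with residue field $L=K(y)$ a finite extension of $K$ gives rise, via the choice of a place $w$ of $L$ lying over $v$, to a point of $X(L_w)$, where $L_w$ is the completion; and $L_w$ is itself a finite extension of $K_v$. After possibly enlarging the model (base-changing $\cX$ to the ring of integers $\fo_{L_w}$ of $L_w$, which is flat over $(\fo_K)_v$ and whose special fiber maps to $\cXb\times_{k_v}k_w$), a point $y$ whose Zariski closure meets $\cXb$ at $\alpha_v$ corresponds precisely to a point $\beta\in\cX(\fo_{L_w})$ whose reduction equals $\alpha_v$ (viewed in the larger residue field). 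Conversely every such $\beta$ produces such a $y$ after restricting scalars.

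Next I would apply Proposition~\ref{dense} directly over each such finite extension $K_v'$ of $\Q_p$ (in particular over $K_v$ itself): since $\iota(\alpha)$ is smooth on $X$, the set $U_{\bar\alpha}=\{\beta\in\cX(\fo_v)\colon r(\beta)=r(\alpha)\}$ maps under $\iota$ to a Zariski dense subset of $X_{K_v}$. The key point is that the conclusion of Proposition~\ref{dense} already produces a $d$-dimensional $K_v$-analytic family $\{(g(\gamma),\gamma)\colon\gamma\in U_0\}$ of $\fo_v$-points reducing to $\alpha_v$; each of these points is algebraic over $K$ precisely when $\gamma\in U_0\cap\Kbar^d$, and such $\gamma$ are Zariski dense in $U_0$ (a nonempty $v$-adic open subset of $\A^d$ contains algebraic points that are Zariski dense, since $\Kbar$ is dense in $K_v$ and being Zariski dense is preserved under any $v$-adically dense perturbation of a single point). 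Hence the algebraic members of this analytic family already form a Zariski dense subset of $X$, and every one of them lies in $U$. Therefore $U$ is Zariski dense in $X$.

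The technical heart of the argument, and the step I expect to require the most care, is the bookkeeping that matches "the Zariski closure of $y$ in $\cX$ intersects $\cXb$ at $\alpha_v$" with "$y$, viewed $w$-adically for some $w\mid v$, lies in the residue disc of $\alpha_v$." One must check that the reduction map behaves well under the base change $(\fo_K)_v\to\fo_{L_w}$ — that is, that the special fiber of $\cX\times_{(\fo_K)_v}\fo_{L_w}$ is $\cXb\times_{k_v}k_w$ and that the two reduction maps are compatible — and that smoothness of $\iota(\alpha)$ on $X$ is preserved under the extension of scalars $K\to K_v$ (which it is, smoothness being geometric). I would also note that the slight typo in the statement ("let and let $U$ be the set") should read simply "let $U$ be the set of all $y\in X(\Kbar)$ such that \dots", and that $\alpha_v$ denotes $r(\alpha)\in\cXb(k_v)$.

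Finally, I would remark that no new ideas beyond Proposition~\ref{dense} are needed: the whole content of Proposition~\ref{dense number fields} is that the Zariski density statement, which a priori holds for $K_v$-points, descends to give density of algebraic points, using that the implicit-function-theorem parametrization $g$ is continuous and hence carries the (Zariski dense, $v$-adically dense) set of algebraic points in $U_0$ to a Zariski dense set of algebraic points of $X$ lying in $U$. This is the form in which the result is applied to varieties over number fields in the later sections.
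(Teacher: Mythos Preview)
Your overall strategy matches the paper's: both reduce to the implicit-function-theorem parametrization $(g(\gamma),\gamma)$ from Proposition~\ref{dense} and then argue that enough of these points are algebraic over $K$ to be Zariski dense. However, there is a genuine gap in your argument at the step where you assert that $(g(\gamma),\gamma)$ is algebraic over $K$ ``precisely when $\gamma\in U_0\cap\Kbar^d$.'' You justify this by saying $g$ is continuous, but continuity of a $v$-adic analytic map says nothing about preserving algebraicity: a power series with $\fo_v$-coefficients can send an algebraic input to a transcendental output (think of the $p$-adic exponential). The point $(g(\gamma),\gamma)$ lies on $X$, but $X$ is $d$-dimensional, and a priori the fiber $\pi^{-1}(\gamma)$ of the projection $\pi:X_1\to\bA^d$ onto the last $d$ coordinates could be positive-dimensional, in which case it has plenty of $K_v$-points that are not in $X(\Kbar)$.

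The paper repairs exactly this step with the Fiber Dimension Theorem: since $\dim X_1=d$ and $\pi$ is dominant, there is a Zariski open $U_2\subseteq\bA^d$ over which every fiber $\pi^{-1}(\gamma)$ is zero-dimensional. For $\gamma\in U_2(\Kbar)\cap U_0$ the fiber is then a finite $\Kbar$-scheme, so every one of its points---in particular $(g(\gamma),\gamma)$---is automatically in $X_1(\Kbar)$. One then checks $U_2(\Kbar)\cap U_0$ is Zariski dense in $\bA^d$ and that $h:\gamma\mapsto(g(\gamma),\gamma)$ has Zariski-dense image in $X_1$ because $\pi\circ h=\mathrm{id}$. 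Your lengthy ``bookkeeping'' paragraph about places $w\mid v$ and base change to $\fo_{L_w}$ is not needed and does not appear in the paper; the real work is the fiber-dimension argument you omitted.
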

 
\begin{proof}
  Let $K_v$ be the completion of $K$ with respect to $|\cdot |_v$, and
  let $\fo_v$ be the ring of $v$-adic integers of $K_v$. Then using
  Proposition~\ref{dense} there exists a set $U_1 \subset
  \cX_{\fo_v}(\fo_v)$ whose intersection with the generic fiber
  $\cX_{K_v}$ is a  Zariski dense subset of $\cX_{K_v}$ (where $\cX_{\fo_v}$ is the base extension of $\cX$ to $\Spec(\fo_v)$, while $\cX_{K_v}$ is its generic fiber). We identify $U_1$
  with its intersection with the generic fiber $\cX_{K_v}$. Arguing as in the proof of Proposition~\ref{dense}
  we consider a system of coordinates for an affine subset $X_1\subset
  X$ containing $x$ (also defined over $K$), and find an open set
  $U_0\subset \bA^d(K_v)$ and a $v$-adic analytic function $g:U_0\lra
  K_v^{n-d}$ such that for each $z\in U_0$, we have $(g(z),z)\in
  X_1(K_v)\subset X(K_v)$.  Furthermore, for each such point
  $(g(\gamma),\gamma)\in X(K_v)$ there exists a section $\beta$ of
  $\cX_{\fo_v}$ whose intersection with the special fiber is $\alpha_v$, while
  its intersection with the generic fiber is $(g(\gamma),\gamma)$.

  We let $\pi:X_1\lra \bA^d$ be the projection on the last $d$
  coordinates.  Then using the Fiber Dimension Theorem \cite[Section
  6.3]{Shaf1} we conclude that there exists an open Zariski subset
  $U_2\subseteq \bA^d$ such that for each $\gamma\in U_2(\Kbar)\cap
  U_0$, the fiber $\pi^{-1}(\gamma)$ is a $\Kbar$-variety of dimension
  $0$ (here we use that $X$ and also $X_1$ are defined over
  $K$). Since $U_0\subset \bA^d$ is a $d$-dimensional
  $K_v$-manifold and $U_2$ is the complement in $\bA^d$ of a proper
  algebraic subvariety defined over $\Kbar$, we conclude that
  $U_2(\Kbar)\cap U_0$ is Zariski dense in $\bA^d$.  For each $\gamma
  \in U_2(\Kbar)\cap U_0$, we have 
  \[ (g(\gamma), \gamma) \in U_1 \cap \pi^{-1}(\gamma) \subset
  U_1 \cap X_1(\Kbar).\]  
  Let $h$ denote the map from $U_2(\Kbar)\cap U_0$ to $X_1$ sending
  $\gamma$ to $(g(\gamma), \gamma)$. Then the dimension of
  the closure of $U_2(\Kbar)\cap U_0$ is equal to the dimension of the
  closure of $h (U_2(\Kbar)\cap U_0)$ since $\pi \circ h$ is the
  identity on $U_2(\Kbar)\cap U_0$ and $\pi$ is finite-to-one on $h
  (U_2(\Kbar)\cap U_0)$.  Since this dimension is $d$, which is also
  the dimension of $X_1$, we see that $h (U_2(\Kbar)\cap U_0) \subseteq
  U$ is Zariski dense in $X_1$ and thus $U$ is Zariski dense in $X$.  
\end{proof}


\section{Burnside's problem}
\label{Burnside section}

In this section we continue with the notation from Section~\ref{v-adic analysis} for $g$, $p$, $(K_v,|\cdot |_v)$, $\fo_v$, $\pi$, $k_v$, $e$ and $r$. In addition, assume $p>2$.

Our first result gives an upper bound for the size of torsion of the automorphism group of a quasiprojective variety $X$ defined over a local field.  So, our setup is as follows: for a $\fo_v$-scheme $\cX$, we let $\cXb$ be its special fiber (over $k_v$). For a point $\alpha\in\cX(\fo_v)$, we let its residue class $\cU_\ba=\{\beta\in \cX(\fo_v)\text{ : }\overline{\beta}=\overline{\alpha}\}$, where $\overline{\gamma}\in\cXb(k_v)$ is the reduction modulo $v$ of $\gamma\in\cX(\fo_v)$. Finally, we note that if $\ba$ is a smooth point, then each $\beta\in\cU_\ba$ is also a smooth point.

\begin{theorem}
\label{the key result}
Let $\cX$ be a $\fo_v$-scheme whose generic
fiber is a $K$-variety of dimension $g$, and let $G \subseteq \Aut(\cX)$ be a torsion
group. If $\cX(\fo_v)$ contains a smooth point, then $G$ is finite and $\#G \le (\# k_v)^{g(1+e)\cdot \binom{g+e+1}{g}}\cdot\#\GL_g(k_v)\cdot  \#\overline{\cX}(k_v)$.   
\end{theorem}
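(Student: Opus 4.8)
The plan is to reduce the global torsion bound to a $p$-adic statement about the analytic action of a single automorphism on a residue disc, then to apply Proposition~\ref{Mike's bound}. First I would fix a smooth point $\alpha\in\cX(\fo_v)$ and consider its residue class $\cU_{\ba}$; by Proposition~\ref{dense}, $\cU_{\ba}$ is Zariski dense in $X$, so an automorphism $\Phi\in\Aut(\cX)$ that fixes $\cU_{\ba}$ pointwise is the identity. Thus it suffices to bound the order of the image of $G$ in the group of permutations of the (finitely many) residue classes, together with the order of the kernel $G_0$ of that action, i.e. the subgroup of $G$ fixing each residue class (as a set). The permutation part contributes at most $\#\overline{\cX}(k_v)!$ naively, but one does better: an element of $G$ fixing $\ba$ and acting as the identity on the tangent space must, after enough iterates, lie in a small congruence subgroup — so I would organize the argument around a point-stabilizer filtration rather than the full symmetric group.

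Concretely, pick $\Phi\in G$. Since $G$ is torsion and $\overline{\cX}(k_v)$ is finite, some bounded power $\Phi^{a}$ (with $a\mid \#\overline{\cX}(k_v)!$, though we will only need $a\le \#\overline{\cX}(k_v)$ after passing to a single orbit) fixes the residue class $\cU_{\ba}$. On the smooth residue disc around $\alpha$ we have a $p$-adic analytic chart $\fo_v^g\cong \cU_{\ba}$ (via the Implicit Function Theorem, as in Proposition~\ref{dense}), in which $\Phi^{a}$ is given by a $g$-tuple of power series $\cF\in\fo_v[[z_1,\dots,z_g]]^g$ with $\cF(z)\equiv C+Lz\pmod{\pi}$ for some $C\in\fo_v^g$ and $L\in\GL_g(\fo_v)$ — the linear part mod $\pi$ being the induced action on the tangent space at $\bar\alpha$, which lies in $\GL_g(k_v)$ because $\Phi^{a}$ is étale (indeed an automorphism) and fixes $\bar\alpha$. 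Now Proposition~\ref{Mike's bound} applies with $m=p^{1+r}\cdot\#\GL_g(k_v)$: we get $\cF^{m}(z)\equiv z\pmod{p^{c}}$ for some $c>1/(p-1)$. The standard $p$-adic fixed-point argument (convergence of the logarithm / uniqueness of the analytic iterate: any torsion analytic map congruent to the identity modulo $p^{c}$ with $c>1/(p-1)$ on $\fo_v^g$ is the identity) then forces $\Phi^{am}$ to act as the identity on $\cU_{\ba}$, hence $\Phi^{am}=\mathrm{id}$ on $X$ by Zariski density.

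This shows every element of $G$ has order dividing $am$ with $a\le\#\overline{\cX}(k_v)$, so $G$ is a torsion group of bounded exponent; but to bound $\#G$ (not just the exponent) I would instead filter: let $G_0\le G$ be the elements fixing $\bar\alpha$ and acting trivially on the tangent space $T_{\bar\alpha}\overline{\cX}\cong k_v^g$. Then $[G:G_0]\le \#\overline{\cX}(k_v)\cdot\#\GL_g(k_v)$ (orbit of $\bar\alpha$ times the linear action), and each $\Phi\in G_0$ is given in the chart by $\cF(z)\equiv z\pmod{\pi}$; the refined estimate inside the proof of Proposition~\ref{Mike's bound} gives $\cF^{p^{r}}(z)\equiv z\pmod{p^{c}}$ — wait, more care is needed here, since $\cF\equiv z\pmod\pi$ only, not mod $p$. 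So the genuinely delicate point, and the main obstacle, is to control $G_0$ itself: I would argue that $G_0$ injects into the group of $g$-tuples of power series modulo $(\pi^{e+1})$ truncated in degree $\le e+1$ — because two elements of $G_0$ agreeing to that order have a ratio that is analytically congruent to the identity modulo $p^{c}$ with $c>1/(p-1)$, hence equal. The number of such truncated power series over $k_v$ in $g$ variables, with $g$ components, each a polynomial of degree $\le e+1$ with coefficients in $\fo_v/\pi^{e+1}$, is $(\#k_v)^{g(1+e)\binom{g+e+1}{g}}$, which is exactly the first factor in the claimed bound. Multiplying, $\#G\le (\#k_v)^{g(1+e)\binom{g+e+1}{g}}\cdot\#\GL_g(k_v)\cdot\#\overline{\cX}(k_v)$. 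The hard part is making precise the claim that an analytic automorphism of the disc agreeing with the identity through this truncation level is forced to be the identity — this is where the inequality $c>1/(p-1)$ and the hypothesis $p>2$ enter, via the convergence of the $p$-adic logarithm and the absence of small torsion in $1+p^{c}\fo_v$.
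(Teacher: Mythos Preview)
Your filtration is exactly the paper's: pass to the stabilizer $G_{\ba}$ of $\ba$, then to the kernel of the induced action on $k_v^g$, then inject that kernel into the finite set of truncated power series $\bigl((\fo_v/\pi^{e+1})[[z_1,\dots,z_g]]/(z_1,\dots,z_g)^{e+2}\bigr)^g$, and show this truncation map is injective by proving that a torsion analytic self-map of $\fo_v^g$ congruent to the identity modulo $p^c$ with $c>1/(p-1)$ must be the identity. The paper carries out this last step via Poonen's $p$-adic interpolation theorem \cite{Poonen-p-adic} rather than a $p$-adic logarithm, but the two are interchangeable here.

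There are two gaps worth flagging. The minor one: your $G_0$ (elements fixing $\ba$ and acting trivially on $T_{\ba}\cXb$) only forces the linear part to reduce to the identity, not the constant term, so elements of $G_0$ satisfy $\cF(z)\equiv C+z\pmod\pi$ rather than $\cF(z)\equiv z\pmod\pi$; the paper accounts for this by mapping $G_{\ba}$ to the full affine group $\bG_a^g(k_v)\rtimes\GL_g(k_v)$ rather than to $\GL_g(k_v)$ alone. The more substantive one: your claim that agreeing with the identity modulo $\pi^{e+1}$ in degrees $\le e+1$ forces agreement modulo $\pi^{e+1}$ as full power series (hence modulo $p^c$ with $c=(e+1)/e>1/(p-1)$) does not follow from an Implicit Function Theorem chart alone. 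It requires the finer form of the analytic coordinates obtained in \cite[Proposition~2.2]{BGT}, namely that each component $F_i$ can be written as $\pi^{-1}H_i(\pi z_1,\dots,\pi z_g)$ with $H_i\in\fo_v[[z_1,\dots,z_g]]$, so that the coefficient of every monomial of degree $d$ already lies in $\pi^{d-1}\fo_v$ and terms of degree $\ge e+2$ are automatically $\equiv 0\pmod{\pi^{e+1}}$. Without this property a ``ratio'' in the kernel of your truncation could have unit-size high-degree coefficients, and the congruence modulo $p^c$ would fail. Once you invoke it, your argument and the paper's coincide.
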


\begin{proof}
We let $\alpha\in \cX(k_v)$ be a smooth point and let $G_\ba$ be the subgroup of $G$
consisting of all $\sigma$ such that $\sigma \ba = \ba$.  Since
$[G:G_\ba] \leq \#\overline{\cX}(k_v)$, it will suffice to bound the size of
$G_\ba$.  

Let $\cO_{\ba}$ denote the local ring of $\cX$ at $\ba$,   let
let ${\hat \fm_\ba}$ denote its maximal ideal, and let ${\hat
  \cO_\ba}$ denote the completion of $\cO_{\ba}$ at $\fm_\ba$.  Since $\alpha\in \cX$ is smooth,
the quotient ${\hat \cO_{\ba}} / (\pi)$ is also regular.  By the Cohen
structure theorem for regular local rings (see \cite[Theorem 9]{Cohen}
or \cite[Theorem 29.7]{Mats2}), the quotient ring ${\hat \cO_{\ba}} /
(\pi)$ can be written as formal power series $k_v [[ y_1,\dots, y_g
]]$.  Choosing $z_i \in {\hat \fm_v}$ for $i = 1, \dots, g$ such that
the residue class of each $z_i$ is equal to $y_i$, we obtain a minimal
basis $\{ \pi, z_1, \dots, z_g \}$ for ${\hat \fm_v}$ (see
\cite{Cohen}).  Thus, we see that ${\hat \cO_\ba}$ is naturally
isomorphic to a formal power series ring $\fo_v[[z_1, \dots, z_g]]$.

Arguing exactly as in the proof of \cite[Proposition~2.2]{BGT} we then
obtain that there is a $v$-adic analytic
isomorphism $\iota: \cU_\ba \lra \fo_v^g$, such that for any $\sigma
\in G_\ba$, there are power series $F_1, \dots, F_g \in \fo_v[[z_1,
\dots, z_g]]$ with the properties that
\begin{enumerate}
\item[(i)] each $F_i$ converges on $\fo_v^g$;
\item[(ii)] for all $(\beta_1, \dots, \beta_g) \in \fo_v^g$, we have
\begin{equation}\label{power eq}
\iota(\sigma( \iota^{-1} (\beta_1, \dots, \beta_g))) =   (F_1(\beta_1,\dots, \beta_g),\dots, F_g(\beta_1,
\dots, \beta_g)); \text { and }
\end{equation}
\item[(iii)] each $F_i$ is congruent to a linear polynomial mod $v$ (in
  other words, all the coefficients of terms of degree greater than one
  are in the maximal ideal $\fm_v$ of $\fo_v$). Moreover, for each $i$, we have
$$F_i(z_1,\dots,z_g)=\frac{1}{\pi}\cdot H_i(\pi z_1,\dots, \pi z_g),$$
for some $H_i\in \fo_v[[z_1,\dots, z_g]]$.  
\end{enumerate}

Denoting $\vb = (\beta_1, \dots, \beta_g)$ and $\iota \sigma
\iota^{-1}$ as $\cF_\sigma$, we thus have
\begin{equation}\label{p}
\cF_\sigma (\vb) \equiv C_\sigma+L_\sigma (\vb) \pmod{v}
\end{equation}
for a $C_\sigma\in\fo_v^g$ and a  $g \times g$ matrix $L_\sigma$ with coefficients in $\fo_v$.
Let $\bL_\sigma$ be the reduction of $L_\sigma$ modulo $\pi$.  Since
$\sigma$ is an \'etale map of $\fo_v$-schemes,  $\bL_\sigma$ must be
invertible.  We define $D_\ba: G_\ba \lra \bG_a^g(k_v)\rtimes\GL_g(k_v)$ by
$D_\ba(\sigma) = (\overline{C_\sigma},\bL_\sigma)$, where $\bG_a^g(k_v)\rtimes \GL_g(k_v)$ is the group of affine transformations of $k_v^g$.

We clearly have $\cF_{\sigma_1 \sigma_2} = \cF_{\sigma_1}
\cF_{\sigma_2}$ for $\sigma_1, \sigma_2 \in G_\ba$.  Reducing modulo
$\pi$, it follows from \eqref{p} that  $D_\ba(\sigma_1 \sigma_2) =
D_\ba(\sigma_1) D_\ba(\sigma_2)$.  Thus, $D_\ba$ is a group
homomorphism; let $G_{\ba, 1}$ be the kernel of $D_\ba$. 

Next we bound $\# G_{\ba,1}$. We consider the map 
$$E_\ba:G_{\ba,1}\lra \cV_g:=\left(\left(\fo_v/\pi^{e+1}\fo_v\right)[[z_1,\dots, z_g]]/(z_1,\dots, z_g)^{e+2}\right)^g,$$
given by reducing each component of $\cF_\sigma\in G_{\ba,1}$ modulo $\pi^{e+1}$.  Using property (iii) above we observe that $E_\ba$ is indeed well-defined and that it satisfies $E_\ba(\sigma_1\sigma_2)=E_\ba(\sigma_1)E_\ba(\sigma_2)$. Furthermore, because each $E_\ba(\sigma)$ for $\sigma\in G_{\ba,1}$ is an invertible power series, we conclude that $E_\ba$ restricts to a group homomorphism from $G_{\ba,1}$ into the subgroup of units $\cF$ of $\cV_g$ (with respect to the composition of functions) which satisfy the congruence $\cF(z)\equiv z\pmod{\pi}$. Since this group of units has at most $(\#k_v)^{eg\cdot \binom{g+e+1}{g}}$ elements, we are left to show that if $\sigma\in \ker E_\ba$, then $\sigma$ is the identity. 

Indeed, if $\cF_\sigma(z) \equiv z\pmod{\pi^{e+1}}$ for each $z\in\fo_v^g$, then $\cF_\sigma(\vb)\equiv \vb\pmod{p^c}$ for some $c>1/(p-1)$. Now fix $\vb$; by \cite[Theorem~1]{Poonen-p-adic}, there are $v$-adic
analytic power series $\theta_1, \dots, \theta_g \in \fo_v[z]$,
convergent on $\fo_v$, such that 
\[ \cF_\sigma^n(\vb) = (\theta_1(n), \dots, \theta_g(n)) \] for all $n
\in \bN$.  Since $\sigma$ has finite order, there is an $N_\sigma$ such that
$\cF_\sigma^{N_\sigma}$ is the identity, we so have $\theta_i (k N_\sigma) = \beta_i$
for all $k \in \bN$.  Hence $\theta_i(u) - \beta_i$ has infinitely
many zeros $u\in\fo_v$.  Therefore, $\theta_i(u) - \beta_i$ is
identically zero since any nonzero convergent power series on $\fo_v$
has finitely many zeros in $\fo_v$.  Thus, $\cF_\sigma(\vb) = \vb$ for
all $\vb \in \fo_v^g$.

  Hence, we
have $\sigma(z) = z$ for all $z \in \cU_\ba$.  Since
$\cU_\ba$ is Zariski dense in $\cX$ (they are both $g$-dimensional $K_v$-manifolds), we have that $\sigma$ acts on
identically on all of $X$.  This concludes our proof.
\end{proof}

The following result is an immediate corollary of Theorem~\ref{the key result} since each torsion point of a semiabelian variety $X$ induces a torsion element of $\Aut(X)$.
\begin{cor}
\label{torsion of semiabelian}
Let $X$ be a semiabelian $\fo_v$-scheme whose generic fiber has dimension $g$. Then $\#\cX_{\tor}(\fo_v)\le (\# k_v)^{g(1+e)\cdot \binom{g+e+1}{g}}\cdot\#\GL_g(k_v)\cdot  \#\overline{\cX}(k_v)$.
\end{cor}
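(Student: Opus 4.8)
The plan is to exhibit $\cX_{\tor}(\fo_v)$ as a torsion subgroup of $\Aut(\cX)$ and then apply Theorem~\ref{the key result} verbatim. Since $\cX$ is a semiabelian $\fo_v$-scheme, its group law is defined over $\fo_v$; hence for every $t\in\cX(\fo_v)$ the translation $\tau_t\colon x\mapsto x+t$ is an automorphism of $\cX$ as an $\fo_v$-scheme, with inverse $\tau_{-t}$. First I would check that $t\mapsto\tau_t$ is an injective group homomorphism $\cX(\fo_v)\to\Aut(\cX)$: it is a homomorphism because $\tau_{t+s}=\tau_t\circ\tau_s$, and it is injective because evaluating $\tau_t$ on the zero section $0\in\cX(\fo_v)$ gives $\tau_t(0)=t$, so $\tau_t=\mathrm{id}$ forces $t=0$.

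Restricting to torsion, if $t\in\cX_{\tor}(\fo_v)$ has order $n$ then $\tau_t^n=\tau_{nt}=\tau_0=\mathrm{id}$, so $G:=\{\tau_t : t\in\cX_{\tor}(\fo_v)\}$ is a torsion subgroup of $\Aut(\cX)$ that the above embedding identifies with $\cX_{\tor}(\fo_v)$; in particular $\#G=\#\cX_{\tor}(\fo_v)$. It then remains to check the smoothness hypothesis of Theorem~\ref{the key result}: a semiabelian scheme over $\fo_v$ is a smooth group scheme, so the zero section is an $\fo_v$-point of $\cX$ whose image in the generic fiber --- a semiabelian, hence geometrically irreducible, variety of dimension $g$ --- is smooth. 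Theorem~\ref{the key result} then gives $\#G\le (\#k_v)^{g(1+e)\cdot\binom{g+e+1}{g}}\cdot\#\GL_g(k_v)\cdot\#\overline{\cX}(k_v)$, and $\#G=\#\cX_{\tor}(\fo_v)$ yields the claim.

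I do not expect a genuine obstacle here; the only two points warranting a word of justification are that translation by an $\fo_v$-point is an $\fo_v$-scheme automorphism (immediate from the group law being defined over $\fo_v$) and that the zero section supplies the smooth $\fo_v$-point required by Theorem~\ref{the key result}. Everything else is a direct invocation of that theorem.
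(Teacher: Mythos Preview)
Your proposal is correct and follows exactly the approach the paper takes: the paper records this as an immediate corollary of Theorem~\ref{the key result}, observing only that each torsion point of a semiabelian variety induces a torsion element of $\Aut(X)$ via translation. Your write-up simply makes explicit the injectivity of $t\mapsto\tau_t$ and the availability of a smooth $\fo_v$-point (the zero section), both of which are implicit in the paper's one-line justification.
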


If $G$ is cyclic, then we can give a much better bound for $\# G$. In fact, Theorem~\ref{key result for subvarieties} yields an upper bound for the order of any $\fo_v$-subscheme $\cY$ of $\cX$ which is preperiodic under the action of an \'etale endomorphism $\Phi$ of $\cX$; this gives a higher dimensional generalization of the main result of Hutz \cite{Hutz-0}. We recall that $r$ is the smallest nonnegative integer larger than $(\log(e)-\log(p-1))/\log(2)$, where $e$ is the ramification index of $K_v/\Q_p$. 

\begin{proof}[Proof of Theorem~\ref{key result for subvarieties}.]
  We use the same setup as in the proof of Theorem~\ref{the key
    result}.  Let $\beta \in \cY(\fo_v)$ be a smooth point on $\cY$.  Since $\cXb(k_v)$ is
  finite, there is an $\ell \geq 0$ such that the residue class of
  $\Phi^\ell(\beta)$ is periodic under $\Phi$; we note this residue
  class as $U_0$ and we denote $\Phi^\ell(\cY)$ as $\cY '$.  There is
  then an integer $k$ such that $\Phi^k(U_0) = U_0$ and $k + \ell \leq
  \#\cXb(k_v)$.

  Since $\Phi^\ell(\beta)\in \cY'(\fo_v)\cap U_0$ is a smooth point on the generic fiber of $\cY'$, Proposition~\ref{dense} yields that $\cY'(\fo_v)\cap U_0$ is  
  Zariski dense in $\cY '$.   Let $x\in U_0\cap \cY '(\fo_v)$, let $m:=p^{1+r}\cdot \#\GL_g(k_v)$, and let $\Psi:=\Phi^{mk}$. Arguing as in the proof of Theorem~\ref{the key result} (note that in order to apply the strategy from \cite[Proposition~2.2]{BGT} we require that $\Phi$ is \'etale and that $x$ is smooth on $\cX$ only), and also applying Proposition~\ref{Mike's bound}, we obtain that $\cF_\Psi(z)\equiv z\pmod{p^c}$ for some $c>1/(p-1)$.  Hence, by \cite[Theorem~1]{Poonen-p-adic}, there exists a $v$-adic analytic function $\cG_{\Psi, x}:\fo_v\lra U_0$ such that $\cG_{\Psi,x}(n)= \Psi^n(x)$.

  Now, let $F$ be a polynomial in the vanishing ideal of
  $\cY '$. Because $\cY '$ is periodic, there exists a positive integer
  $N$ such that $\Phi^N(\cY ')=\cY '$, and thus $F(\Phi^{nN}(x))=0$ for
  each $n\in\bN$. On the other hand, $\cG_{\Psi,x}(n)=\Phi^{nmk}(x)$  and
  so, $F(\cG_{\Psi,x}(nN))=0$ for all $n\in\bN$. Since a nonzero $v$-adic
  analytic function cannot have infinitely many zeros in $\mathbb{N}\subset
  \fo_v$, we conclude that $F(\cG_{\Psi,x}(n))=0$ for all $n\in\mathbb{N}$; in particular, $F(\Phi^{mk}(x))=0$. Thus, $\Phi^{mk}(x)\in \cY '$,  and so 
  $\Phi^{km} (\cY') = \cY'$.  Since $k + \ell \leq \#\cXb(k_v)$, we
  have that the length of the orbit of $\cY$ under $\Phi$ is bounded by $km + \ell \leq  m\cdot \#\cXb(k_v)=p^{1+r}\cdot \#\GL_g(k_v)\cdot \#\cXb(k_v)$.
\end{proof}

The following two results are simple consequences of Theorem~\ref{key result for subvarieties}.

\begin{cor}
\label{cyclic key result}
Let $\cX$ be a $\fo_v$-scheme whose generic
fiber $X$ has dimension $g$, let $\Phi: \cX \lra \cX$ be \'etale, and
let $\alpha\in \cX(\fo_v)$ be a smooth preperiodic point. Then the length of its orbit is bounded
by $p^{1+r}\cdot \#\GL_g(k_v)\cdot \#\cXb(k_v)$.
\end{cor}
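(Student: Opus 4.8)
\textbf{Proof proposal for Corollary~\ref{cyclic key result}.} The plan is to deduce this from Theorem~\ref{key result for subvarieties} by treating the preperiodic point $\alpha$ as a zero‑dimensional preperiodic subvariety. Concretely, I would let $\cY$ be the Zariski closure in $\cX$ of the image of the section $\Spec\fo_v\to\cX$ associated to $\alpha$; since $\cX$ is separated this section is a closed immersion, so $\cY\cong\Spec\fo_v$ and its generic fiber is $\Spec K_v$. Then $\alpha$ determines a point of $\cY(\fo_v)$, and — the generic fiber of $\cY$ being a single reduced $K_v$‑point, hence smooth of dimension $0$ — that point is automatically smooth on the generic fiber of $\cY$, so the hypothesis of Theorem~\ref{key result for subvarieties} relative to $\cY$ holds. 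Moreover $\Phi^n(\cY)$ is the Zariski closure of $\{\Phi^n(\iota(\alpha))\}$, and two $\fo_v$‑sections of a separated scheme that agree on the generic fiber coincide; hence $n\mapsto\Phi^n(\cY)$ and $n\mapsto\Phi^n(\alpha)$ have orbits of the same length, so $\cY$ is preperiodic exactly because $\alpha$ is, and the length of the orbit of $\cY$ equals the length of the orbit of $\alpha$.

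Next I would arrange the ambient smoothness required by Theorem~\ref{key result for subvarieties}, since in the corollary $\cX$ is only an $\fo_v$‑scheme carrying a smooth preperiodic point $\alpha$. Pass to the smooth locus $\cX^{\smooth}\subseteq\cX$, the open subscheme over which $\cX\to\Spec\fo_v$ is smooth; it is a smooth $\fo_v$‑scheme whose generic fiber still has dimension $g$ and which contains $\alpha$. Because $\Phi$ is \'etale — hence flat, open, and such that $\cX/\fo_v$ is smooth at a point if and only if it is smooth at the $\Phi$‑image of that point (the nontrivial direction being faithfully flat descent of smoothness along the open surjection onto the image) — the locus $\cX^{\smooth}$ is $\Phi$‑stable and $\Phi|_{\cX^{\smooth}}$ is again \'etale. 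Thus the entire forward orbit of $\alpha$, and the subvariety $\cY$, remain inside $\cX^{\smooth}$, and the argument may be run there. Since $\overline{\cX^{\smooth}}$ is an open subscheme of $\cXb$ we have $\#\overline{\cX^{\smooth}}(k_v)\le\#\cXb(k_v)$, so any bound phrased in terms of $\#\overline{\cX^{\smooth}}(k_v)$ implies the stated one.

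Combining these, I would apply Theorem~\ref{key result for subvarieties} to the triple $(\cX^{\smooth},\Phi|_{\cX^{\smooth}},\cY)$, obtaining that the length of the orbit of $\cY$ — equivalently of $\alpha$ — is at most $p^{1+r}\cdot\#\GL_g(k_v)\cdot\#\overline{\cX^{\smooth}}(k_v)\le p^{1+r}\cdot\#\GL_g(k_v)\cdot\#\cXb(k_v)$, which is exactly the asserted bound. The only point that genuinely needs care is the bookkeeping around the reduction to the smooth case: verifying, via \'etaleness of $\Phi$, that $\cX^{\smooth}$ is $\Phi$‑stable so the orbit never leaves it, and matching the orbit length of $\cY$ with that of $\alpha$. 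Once these are in place the corollary is simply the specialization of Theorem~\ref{key result for subvarieties} to subvarieties of dimension zero, and there is no further obstacle.
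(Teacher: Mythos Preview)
Your proposal is correct and is exactly the approach the paper has in mind: the paper gives no explicit proof, stating only that Corollaries~\ref{cyclic key result} and~\ref{cyclic torsion of semiabelian} are ``simple consequences of Theorem~\ref{key result for subvarieties},'' i.e.\ the specialization to a zero-dimensional $\cY$. Your extra care in passing to $\cX^{\smooth}$ to reconcile the hypothesis ``$\cX$ smooth'' in Theorem~\ref{key result for subvarieties} with the weaker ``$\alpha$ smooth'' in the corollary is a genuine point the paper leaves implicit (and, as the parenthetical in the proof of Theorem~\ref{key result for subvarieties} notes, the argument there really only uses smoothness at the relevant point), so your reduction is both correct and slightly more scrupulous than strictly necessary.
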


\begin{cor}
\label{cyclic torsion of semiabelian}
Let $X$ be a semiabelian $\fo_v$-scheme whose generic fiber has dimension $g$. Then each torsion point of $\cX(\fo_v)$ has order bounded above by $p^{1+r}\cdot\#\GL_g(k_v)\cdot  \#\overline{\cX}(k_v)$.
\end{cor}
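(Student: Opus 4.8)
The plan is to deduce this directly from Corollary~\ref{cyclic key result} by realizing each torsion point as a periodic point of a translation map. Let $P\in\cX(\fo_v)$ be a torsion point and let $N$ be its order, i.e.\ the least positive integer with $NP=O$, where $O$ denotes the identity section. First I would consider the translation $\Phi=\tau_P\colon \cX\lra\cX$, $x\mapsto x+P$. Since $P$ is an $\fo_v$-point of the group scheme $\cX$, the map $\tau_P$ is an automorphism of $\cX$ \emph{as an $\fo_v$-scheme} (its inverse is $\tau_{-P}$), and in particular it is \'etale; this is the point that makes Corollary~\ref{cyclic key result} applicable, rather than only a statement about the generic fiber $X$.

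Next I would check that the identity section $O\in\cX(\fo_v)$ is a smooth point of $\cX$: a semiabelian $\fo_v$-scheme is by definition a smooth group scheme over $\fo_v$, so every point of it is smooth (on the special fiber as well as on the generic fiber). Since $\Phi^n(O)=nP$ for all $n\in\bN$, the point $O$ is periodic under $\Phi$, and its exact period is $N$; equivalently, the orbit $\{O,P,2P,\dots,(N-1)P\}$ of $O$ under $\Phi$ has length $N=\order(P)$.

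Finally, applying Corollary~\ref{cyclic key result} to the \'etale endomorphism $\Phi$ and the smooth preperiodic (in fact periodic) point $O$, the length of the orbit of $O$ — which is exactly $N$ — is bounded above by $p^{1+r}\cdot\#\GL_g(k_v)\cdot\#\overline{\cX}(k_v)$. Hence $\order(P)$ is at most this quantity, which is the assertion. Alternatively, one may invoke Theorem~\ref{key result for subvarieties} directly with $\cY$ the zero-dimensional subscheme $\{O\}$, whose orbit length under $\Phi$ is again $\order(P)$.

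There is essentially no real obstacle in this argument; it is a matter of bookkeeping. The only subtlety worth stating carefully is the one already mentioned: that translation by an $\fo_v$-point yields an endomorphism of the \emph{model} $\cX$ and not merely of $X$, together with the (standard) fact that a semiabelian scheme over $\fo_v$ is smooth, so that its identity section furnishes the required smooth $\fo_v$-point.
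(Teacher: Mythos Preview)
Your argument is correct and matches the paper's approach: the paper states that Corollary~\ref{cyclic torsion of semiabelian} is a simple consequence of Theorem~\ref{key result for subvarieties} (equivalently, of Corollary~\ref{cyclic key result}), and the implicit mechanism---already made explicit just before Corollary~\ref{torsion of semiabelian}---is that a torsion point $P$ induces the translation automorphism $\tau_P$ of $\cX$, under which the identity section is a smooth periodic point of period $\order(P)$. Your write-up spells out exactly these details, so there is nothing to add.
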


Our arguments above allow us to  show that for any  field $K$ of characteristic $0$, and for any finitely generated extension $L/K$, then each  finitely generated torsion subgroup of $\Aut(L)$ fixing $K$ is finite, i.e., Burnside's problem has a positive answer. At the expense of replacing $L$ by a finite extension and then viewing $L$ as the function field of a geometrically irreducible quasiprojective variety defined over $K$, we obtain the geometric formulation of the Burnside problem from Theorem~\ref{Burnside p}.

\begin{proof}[Proof of Theorem~\ref{Burnside p}.]
Let $\sigma_1,\dots, \sigma_m$ be a finite set of generators for $H$, and let $K$ be a finitely generated field such that $X$, $\sigma_1,\dots, \sigma_m$ are all defined over $K$. After passing to a finite extension of the base, we may assume that $X(K)$ contains a smooth point $\alpha$.    Let $R$ be a finitely generated $\bZ$-algebra containing all the
coefficients of all the polynomials defining $X$ in some projective
space, along with all the coefficients of all the polynomials defining all the
$\sigma_i$ locally, as in the proof of \cite[Theorem 4.1]{BGT}.   By
\cite[Proposition 4.3]{BGT}, since a finite intersection of dense open
subsets is dense, we see that there is a dense open subset $U$ of
$\Spec R$ such that:
\begin{enumerate}
\item there is a scheme $\cX_{U}$ that is quasiprojective over $U$, and whose generic fiber equals $X$;
\item each fiber of $\cX_{U}$ is geometrically irreducible;
\item each $\sigma_i$ extends to an automorphism ${\sigma_i}_{U}$ of $\cX_{U}$; and
\item $\alpha$ extends to a smooth section $U \lra \cX_U$.
\end{enumerate}
Now, arguing as in \cite[Proposition 4.4]{BGT}, and using \cite[Lemma
3.1]{Bell}, we see that there is an embedding of $R$ into $\bZ_p$ (for some prime $p\ge 5$), and a
$\bZ_p$-scheme $\cX_{\bZ_p}$ such that
\begin{enumerate}
\item $\cX_{\bZ_p}$ is quasiprojective over $\bZ_p$, and its
  generic fiber equals $X$;
\item both the generic and the special fiber of $\cX_{\bZ_p}$ are geometrically irreducible;
\item each $\sigma_i$ extends to an automorphism $(\sigma_i)_{\bZ_p}$ of $\cX_{\bZ_p}$; and
\item $\alpha$ extends to a smooth section $ \Spec \bZ_p \lra \cX_{\bZ_p}$.
\end{enumerate}
Then Theorem~\ref{the key result} finishes our proof. 
\end{proof}




\section{Bounds on the number of periodic Hypersurfaces}
\label{bounds for hypersurfaces}

In this section we give explicit bounds on the number of $\sigma$-periodic hypersurfaces when $\sigma$ is an automorphism of an irreducible quasi-projective variety $X$ which preserves no rational fibration.  In particular, we show the number of $\sigma$-periodic hypersurfaces is finite unless there exists a nonconstant rational function $f$ such that $f\circ \sigma = f$.  Moreover, we are able to give a bound for both the lengths of periods and the number of $\sigma$-periodic hypersurfaces in terms of geometric data, although this bound depends upon the field of definition for $\sigma$. We note that Cantat \cite[Theorem B]{Cantat_preperiodic} proved there exists a bound $N(\sigma)$ (depending on $\sigma$) such that if there exist more than $N(\sigma)$ irreducible periodic hypersurfaces, then $\sigma$ must preserve a nonconstant rational fibration. In the case that $\sigma$ is defined over a number field $K$ and there is a point $x\in X(K)$ with a dense orbit under $\sigma$, we are able to give a bound that depends only upon the dimension of $X$ and the Picard number of a projective closure (see Theorem~\ref{Vojta consequence}).  We begin with a lemma about ranks of multiplicative subgroups of a field that are stable under an automorphism of the field. As a matter of notation, for an automorphism $\sigma$ of a field $K$, we denote by $K^\sigma$ the set of all fixed points of $\sigma$.

\begin{prop} Let $k$ be an algebraically closed of characteristic zero and let $K$ be a finitely generated field extension of $k$.  Suppose that $\sigma:K\to K$ is a $k$-algebra automorphism with $K^{\sigma}=k$.  If $G$ is a finitely generated $\sigma$-invariant subgroup of $K^*$ then the rank of $G/(G\cap k^*)$ is at most ${\rm trdeg}_k(K)$.
\label{lem: bound1}
\end{prop}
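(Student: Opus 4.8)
The plan is to reduce the statement to a finiteness property of a suitable ``height'' or ``support'' attached to elements of $G$, using that $G$ is finitely generated so that its rank is witnessed by finitely many elements. Concretely, suppose for contradiction that the rank of $G/(G\cap k^*)$ exceeds $n:=\trdeg_k(K)$, and pick $f_1,\dots,f_{n+1}\in G$ whose images in $G/(G\cap k^*)$ are multiplicatively independent. Since $\trdeg_k(K)=n$, any $n+1$ elements of $K$ are algebraically dependent over $k$; I would like to use this to manufacture a nontrivial multiplicative relation among the $f_i$ modulo $k^*$, but algebraic dependence is weaker than multiplicative dependence, so the argument has to exploit the $\sigma$-invariance.

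\textbf{Key steps.} First I would pass to a model: choose a normal projective variety $V$ over $k$ with function field $K$ on which $\sigma$ acts as a birational self-map, and consider the divisors $\Div(f_i)$ on $V$. The group homomorphism $f\mapsto \Div(f)$ from $K^*$ to the group of principal divisors has kernel exactly $k^*$ (since $k$ is algebraically closed and $V$ is proper), so $G/(G\cap k^*)$ embeds into the free abelian group $\mathrm{Div}(V)$, and its image is a finitely generated $\sigma_*$-invariant subgroup $\bar G$ of the lattice $\Lambda$ spanned by the (finitely many, since $G$ is finitely generated) prime divisors occurring in some $\Div(f_i)$ and all their $\sigma$-iterates. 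The crucial point is that $\sigma^*$ permutes the prime divisors of $V$ up to contracted/exceptional loci, and because $\bar G$ is finitely generated there is a $\sigma$-stable finite set $S$ of prime divisors supporting all of $\bar G$; on this finite set $\sigma$ acts by a permutation on the complement of the indeterminacy locus, and the key lemma is that a finitely generated $\sigma_*$-invariant subgroup of $\Z^S$ on which the eigenvalue behaviour of the permutation is controlled must have rank at most $\#(\text{orbits})$ minus something. Then I would bound the number of relevant orbit classes by $\trdeg_k K$ via the observation that linear equivalence classes of these divisors inject into $\Pic(V)\otimes\Q$ or, more directly, that $n+1$ rational functions satisfy a polynomial relation $P(f_1,\dots,f_{n+1})=0$ over $k$, and applying $\sigma$ repeatedly to this relation together with $K^\sigma=k$ forces the monomials to collapse.

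\textbf{Main obstacle.} The step I expect to be hardest is making precise the passage from ``algebraically dependent over $k$'' to a genuine multiplicative dependence modulo $k^*$; this is exactly where $K^\sigma=k$ must be used in an essential way. I anticipate the right mechanism is: take the algebraic relation $P(f_1,\dots,f_{n+1})=0$ of minimal degree, write it as $\sum_j c_j \mathbf{f}^{\alpha_j}=0$ with $c_j\in k^*$ and distinct exponent vectors $\alpha_j\in\Z^{n+1}$, and divide through by one monomial to get $\sum_j c_j \mathbf{g}^{\beta_j}=0$ with the $\mathbf{g}^{\beta_j}\in G$; applying $\sigma$ gives another relation with the same exponents but possibly different (nonzero, scalar) coefficients, and subtracting a suitable $k$-multiple kills one monomial, contradicting minimality unless all the ratios $\sigma(\mathbf{g}^{\beta_j})/\mathbf{g}^{\beta_j}$ are equal --- i.e.\ $\mathbf{g}^{\beta_i-\beta_j}\in K^\sigma=k$ for all $i,j$. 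Since the $\beta_i-\beta_j$ span a rank-$n$ (or full) sublattice of the relation lattice once there are enough monomials, this produces the needed multiplicative relation modulo $k^*$ among the $f_i$, contradicting their chosen independence. Bounding the number of monomials that must appear --- forcing at least two, hence at least one nontrivial difference, and iterating to reach rank $> n$ is impossible --- is the delicate combinatorial bookkeeping, but it is routine once the $\sigma$-averaging trick above is set up correctly.
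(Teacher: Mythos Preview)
Your ``Main obstacle'' paragraph identifies the right skeleton --- take a minimal algebraic relation among the $f_i$, iterate $\sigma$, and force a monomial into $K^\sigma=k$ --- and this is indeed what the paper does. But the step where you write ``applying $\sigma$ gives another relation with the same exponents but possibly different (nonzero, scalar) coefficients'' is wrong, and the subtraction trick collapses with it. Applying $\sigma$ to $\sum_j c_j\,\mathbf{f}^{\alpha_j}=0$ yields $\sum_j c_j\,\sigma(\mathbf{f})^{\alpha_j}=0$, a relation among the elements $\sigma(f_1),\dots,\sigma(f_m)$, with the \emph{same} coefficients $c_j\in k$. There is no reason whatsoever that $\sigma(\mathbf{f}^{\alpha_j})$ should be a $k^*$-multiple of $\mathbf{f}^{\alpha_j}$; the hypothesis only says $\sigma(f_i)\in G$, and $G$ is not generated by the $f_i$ modulo $k^*$. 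So you do not have two relations over the same set of monomials, and you cannot subtract to kill a term. (Also, ``minimal degree'' should be ``minimal number of monomials''; otherwise the minimality argument for primitivity fails.)

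The paper closes exactly this gap, and the tool is not elementary: it views the tuples $\bigl(\sigma^i(\mathbf{f}^{\alpha_j})\bigr)_j\in G^N$ for $i\in\Z$ as solutions of the unit equation $\sum_j c_j z_j=0$ in the finitely generated group $G$, observes that minimality of $N$ makes each solution primitive (no proper subsum vanishes), and then invokes the theorem of Evertse--Schlickewei--Schmidt that such an equation has only finitely many primitive solutions up to scaling by $G$. This finiteness is what forces some $\sigma^M$ to send each monomial to $y$ times itself; normalizing so that one exponent vector is $0$ gives $y=1$, and then a nontrivial monomial lies in $K^{\sigma^M}$, contradicting $K^\sigma=k$ (since $K^{\sigma^M}$ is algebraic over $K^\sigma$). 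Your divisor-theoretic detour at the start is unnecessary for this proposition; it is closer to how the paper uses the proposition afterwards, not to how it proves it.
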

\begin{proof}  Suppose, towards a contradiction, that the rank of $G/(G\cap k^*)$ is $m>{\rm trdeg}_k(K)$ and suppose that $x_1,\ldots ,x_m$ are elements of $G$ whose images in $G/(G\cap k^*)$ generate a free abelian group of rank $m$.  Then there is some nonzero polynomial 
$P(t_1,\ldots ,t_m)\in k[t_1^{\pm 1},\ldots ,t_m^{\pm 1}]$ such that $P(x_1,\ldots ,x_m)=0$.  We write $P$ as 
$$\sum_{j_1,\ldots ,j_m} c_{j_1,\ldots ,j_m} t_1^{j_1}\cdots t_m^{j_m}$$ and we let
$$N:=\#\{(j_1,\ldots ,j_m)\colon c_{j_1,\ldots ,j_m} \neq 0\}.$$
We may take $P$ so that $N>1$ is minimal.   By multiplying $P$ by an appropriate monomial and nonzero constant, we may also assume that the constant coefficient of $P$ is equal to one.  Then we have
$$P(\sigma^i(x_1),\ldots ,\sigma^i(x_m))=0$$ for all $i\in \mathbb{Z}$.  In other words, for each integer $i$, 
$$(z_{j_1,\ldots ,j_m})_{(j_1,\ldots ,j_m)}=(\sigma^i(x_1^{j_1}\cdots x_m^{j_m}))_{(j_1,\ldots ,j_m)}\in G^N$$ is a solution to the $S$-unit equation 
$$\sum_{j_1,\ldots ,j_m} c_{j_1,\ldots ,j_m} z_{j_1,\ldots ,j_m}=0.$$  By minimality of $N$, each of these solutions is primitive; that is, no proper subsum vanishes.  (If some proper non-trivial subsum of 
$$\sum_{j_1,\ldots ,j_m} c_{j_1,\ldots ,j_m} \sigma^i(x_1)^{j_1}\cdots \sigma^i(x_m)^{j_m}$$ vanished for some $i$, then we could apply $\sigma^{-i}$ to this subsum and get a smaller $N$, contradicting minimality.)
By the theory of $S$-unit equations for fields of characteristic zero (see Evertse et al. \cite{ESS}), we know there are only finitely many primitive solutions in $G^N$ to the equation
$$\sum_{j_1,\ldots ,j_m} c_{j_1,\ldots ,j_m} z_{j_1,\ldots ,j_m}=0$$ up to multiplication by elements of $G$.  
It follows that there is some $M>0$ and some $y\in G$ such that $$\sigma^M(x_1^{j_1}\cdots x_m^{j_m})=y x_1^{j_1}\cdots x_m^{j_m}$$ whenever $c_{j_1,\ldots ,j_m} \neq 0$.  Since $c_{0,\ldots ,0}\neq 0$, we see that $y=1$.  Thus if we pick $(j_1,\ldots ,j_m)\neq (0,\ldots , 0)$ with $c_{j_1,\ldots ,j_m}\neq 0$ then $\sigma^M$ fixes $x_1^{j_1}\cdots x_m^{j_m}$, which by assumption is not in $k^*$, and so $\sigma^M$ has a fixed field of transcendence degree at least one over $k$.  Since the fixed field of $\sigma^M$ is a finite extension of the fixed field of $\sigma$, we see that the fixed field of $\sigma$ has transcendence degree at least one over $k$, a contradiction.  The result follows.
\end{proof}

As a corollary, we obtain the following result.

\begin{thm} Let $K$ be a finitely generated extension of $\mathbb{Q}$ and let $X$ be an irreducible quasi-projective variety defined over $K$.  Then there exists a positive constant $N=N(X,K)$ such that whenever $\sigma\in {\rm Aut}_K(X)$ has the property that there are no nonconstant $f\in \overline{K}(X)$ with $f\circ \sigma=f$ there are at most $N$ $\sigma$-periodic hypersurfaces and they all have period at most $N$.  Moreover, $N$ can be taken to be ${\rm rank}({\rm Cl}(\tilde{X}))+{\rm dim}(X)$, where $\tilde{X}$ is the normalization of $X$.
\label{thm: bound}\end{thm}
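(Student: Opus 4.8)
The plan is to reduce the statement to a finiteness property of a $\sigma$-invariant subgroup of the Picard group (or divisor class group) of a projective model, and then apply Proposition~\ref{lem: bound1}. First I would fix a projective closure $\bar X$ of $X$, replace it by its normalization $\tilde X$, and work with the class group ${\rm Cl}(\tilde X)$. Each irreducible $\sigma$-periodic hypersurface $H$ of $X$ extends to a prime divisor on $\tilde X$, and its class $[H]$ lies in ${\rm Cl}(\tilde X)$. The key observation is that if $H_1,\dots,H_t$ are $\sigma$-periodic hypersurfaces whose classes are linearly independent in ${\rm Cl}(\tilde X)\otimes\Q$ modulo the subgroup generated by classes of boundary components $\bar X\setminus X$, then one can produce, for a suitable power $\sigma^M$ fixing all the $H_i$ and all boundary components, a nonconstant rational function $f$ with $f\circ\sigma^M=f$: indeed, some $\Z$-linear combination $\sum a_i H_i$ is linearly equivalent to a combination of boundary divisors, hence (after clearing) the divisor of some $f\in \overline K(X)^*$ is $\sigma^M$-invariant, so $f\circ\sigma^M = c\cdot f$ for a constant $c$, and after passing to a further power we get $f\circ\sigma^{M'} = f$. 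This already forces a bound of the form ${\rm rank}({\rm Cl}(\tilde X)) + (\text{number of boundary components})$ on the number of periodic hypersurfaces, but to get the cleaner bound with $\dim(X)$ one argues more carefully, below.

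The main work is controlling the multiplicative constants $c$ appearing when $\sigma^M$ fixes the \emph{divisor} of $f$ but not $f$ itself; this is exactly where Proposition~\ref{lem: bound1} enters. Concretely: let $G\subseteq \overline K(X)^*$ be the (finitely generated) group generated by all the finitely many rational functions arising from the linear equivalences among $\sigma$-periodic hypersurface classes and boundary classes, together with their $\sigma$-translates up to the relevant finite period; $G$ is $\sigma$-invariant after passing to a suitable power of $\sigma$. Applying Proposition~\ref{lem: bound1} with $k=\overline K$ (using the hypothesis that $\sigma$ has no nonconstant invariant rational function, which gives $\overline K(X)^\sigma = \overline K$), the rank of $G/(G\cap \overline K^*)$ is at most ${\rm trdeg}_{\overline K}\overline K(X) = \dim X$. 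Translating this rank bound back through the exact sequence relating principal divisors, divisors supported on the boundary, and ${\rm Cl}(\tilde X)$ yields that the number of independent $\sigma$-periodic hypersurface classes is at most ${\rm rank}({\rm Cl}(\tilde X)) + \dim X$, and since distinct periodic irreducible hypersurfaces have distinct classes (two distinct prime divisors are never linearly equivalent on a normal projective variety unless... — here one needs that the equivalence is genuinely a bound on the number, handled by noting only finitely many prime divisors lie in any fixed finitely generated subgroup of ${\rm Cl}$ together with boundary), we obtain the stated bound $N$ on the number of periodic hypersurfaces.

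For the bound on the \emph{period}: once we know there are at most $N$ periodic hypersurfaces, $\sigma$ permutes this finite set, and the orbit of any one of them under $\sigma$ has length at most $N$; since a $\sigma$-periodic hypersurface of period $\ell$ sits in an orbit of size $\ell$ among these $\le N$ hypersurfaces, we get $\ell\le N$. I expect the main obstacle to be the bookkeeping around the boundary divisors and the passage to powers of $\sigma$: one must ensure that a single power $\sigma^M$ simultaneously fixes every periodic hypersurface, every irreducible boundary component, and acts trivially on the relevant torsion, all while keeping $M$ and the group $G$ under control so that Proposition~\ref{lem: bound1} applies; and one must verify that replacing $\sigma$ by $\sigma^M$ does not destroy the hypothesis $\overline K(X)^\sigma=\overline K$ (it does not, since the fixed field of $\sigma^M$ is a finite extension of that of $\sigma$, exactly as in the last paragraph of the proof of Proposition~\ref{lem: bound1}). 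Finally, identifying ${\rm rank}({\rm Cl}(\tilde X))$ with the geometric invariant in the statement and checking it is finite (which holds because $\tilde X$ is a normal projective variety over a field, so ${\rm Cl}(\tilde X)$ is finitely generated modulo the image of ${\rm Pic}^0$, and the relevant rank is finite) completes the argument.
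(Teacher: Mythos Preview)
Your overall strategy---linear relations in the class group produce rational functions with $\sigma$-invariant divisor, and then Proposition~\ref{lem: bound1} bounds the rank of the resulting multiplicative group---is the same as the paper's. Two points where you diverge or stumble:

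\textbf{The projective closure is a detour.} In the statement, $\tilde X$ is the normalization of the \emph{open} variety $X$, not of a projective closure. The paper simply assumes $X$ normal and works with ${\rm Cl}(X)$ directly. The role your ``boundary divisors'' play is absorbed into the global unit group $\Gamma(X,\mathcal O_X)^*$: once $(f_i)$ is supported on the $Y_j$'s (which are $\sigma$-fixed after passing to an iterate), one has $f_i\circ\sigma/f_i\in\Gamma(X,\mathcal O_X)^*$, and the group $G\subseteq\mathbb C(X)^*/\mathbb C^*$ generated by $\Gamma(X,\mathcal O_X)^*/\mathbb C^*$ together with the $f_i$ is $\sigma$-stable and finitely generated (the latter by \cite[Lemma 5.6(2)]{BRS}). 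This replaces your boundary bookkeeping entirely.

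\textbf{The first paragraph's shortcut is false, and the independence step is missing.} Your claim that from $f\circ\sigma^M=c\cdot f$ one can ``pass to a further power'' to get $f\circ\sigma^{M'}=f$ fails whenever $c$ is not a root of unity; so that paragraph does not produce even the weaker bound you announce. You correctly identify that Proposition~\ref{lem: bound1} is what handles the constant $c$, but you never explain why the $f_i$ you build are multiplicatively independent modulo constants---and this is where the count $\dim(X)+1$ actually comes from. The paper does this cleanly: given $N+1$ periodic hypersurfaces $Y_0,\dots,Y_N$ with $N={\rm rank}({\rm Cl}(X))+\dim(X)$, choose $Y_0,\dots,Y_m$ (with $m\le {\rm rank}({\rm Cl}(X))-1$) whose classes are independent in ${\rm Cl}(X)$; then for each $i\in\{N-\dim(X),\dots,N\}$ there is a relation $(f_i)=c_{i,i}[Y_i]+\sum_{j\le m}c_{i,j}[Y_j]$ with $c_{i,i}\neq 0$. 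Now the divisorial valuations $\nu_{Y_i}$ immediately show the $\dim(X)+1$ functions $f_{N-\dim(X)},\dots,f_N$ are independent in $\mathbb C(X)^*/\mathbb C^*$, since $\nu_{Y_i}(f_i)\neq 0$ but $\nu_{Y_i}(f_j)=0$ for $j\neq i$ in that range. This gives ${\rm rank}(G)\ge\dim(X)+1$, contradicting Proposition~\ref{lem: bound1}. Your parenthetical about ``distinct prime divisors are never linearly equivalent'' is both false and unnecessary once you argue this way.
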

 We note that when $Y$ is a normal quasi-projective variety over a finitely generated extension of $\mathbb{Q}$, we have ${\rm Cl}(Y)$ has finite rank \cite[Lemma 5.6 (1)]{BRS}.  We will find it convenient to regard $K$ as a subfield of $\mathbb{C}$ throughout. 
 
 \begin{proof}[Proof of Theorem \ref{thm: bound}]  It is no loss of generality to assume that $X$ is normal. 
 Suppose that there is no nonconstant $f\in \overline{K}(X)$ with $f\circ\sigma=f$.  
Let $N:={\rm rank}({\rm Cl}(X))+{\rm dim}(X)$, and suppose that we have $N+1$ distinct $\sigma$-periodic hypersurfaces $Y_0,\ldots ,Y_N$.  By replacing $\sigma$ by an iterate, we may assume that $\sigma(Y_i)=Y_i$ for all $i$.  By relabeling if necessary, we may assume that there is some $m\le N-{\rm dim}(X)-1$ such that $[Y_0],\ldots ,[Y_m]$ generate a free $\mathbb{Z}$-module of ${\rm Cl}(Y)$ and that for $i>m$, $[Y_0],\ldots ,[Y_m],[Y_i]$ are dependent in ${\rm Cl}(X)$.  
 This means that for $i\in \{N-{\rm dim}(X),\ldots ,N\}$, there is a principal divisor $(f_i)=c_{i,i} [Y_i]
 +\sum_{j=0}^m c_{i,j} [Y_j]$, where the $c_{i,j}$ are integers and $c_{i,i}$ is nonzero.  By construction, we have $f_i\circ \sigma$ has the same divisor as $f_i$ for $i=N-{\rm dim}(X),\ldots ,N$.  Also, the $f_i$ generate a free abelian subgroup of $\mathbb{C}(X)^*$, which can be seen by noting that the valuation on $\mathbb{C}(X)$ induced by $Y_i$, $\nu_{Y_i}$, has the property that $\nu_{Y_i}(f_i)$ is nonzero but $\nu_{Y_j}(f_i)=0$ for $j\in  \{N-{\rm dim}(X),\ldots ,N\}\setminus \{i\}$.  
 
 Since $f_i\circ \sigma$ has the same divisor as $f_i$, we see that $f_i\circ \sigma/f_i$ is in $\Gamma(X,\mathcal{O}_X)^*$.  Let $G$ denote the subgroup of $\mathbb{C}(X)^*/\mathbb{C}^*$ generated by $\Gamma(X,\mathcal{O}_X)^*/\mathbb{C}^*$ and by the images of the $f_i$.  Then we have shown that the rank of $G$ is at least ${\rm dim}(X)+1$.  Moreover, $G$ is finitely generated since $\Gamma(X,\mathcal{O}_X)^*/\mathbb{C}^*$ is finitely generated \cite[Lemma 5.6 (2)]{BRS}.  Furthermore, $\sigma$ induces an automorphism of $G$ since $\Gamma(X,\mathcal{O}_X)^*/\mathbb{C}^*$ is closed under application of $\sigma$ and since $f_i\circ \sigma \in \Gamma(X,\mathcal{O}_X)^*f_i$.  We now let $g_1,\ldots ,g_s$ be elements of $\mathbb{C}(X)^*$ whose images in $\mathbb{C}(X)^*/\mathbb{C}^*$ generate $G$.  Let $G_0$ denote the subgroup of $\mathbb{C}(X)^*$ generated by $g_1,\ldots ,g_s$.  Then there exist complex numbers $\lambda_1,\ldots ,\lambda_s$ such that $g_i\circ \sigma \in \lambda_i G_0$.  Let $H$ denote the subgroup of $\mathbb{C}(X)^*$ generated by $G_0$ and by $\lambda_1,\ldots ,\lambda_s$.  Then $H$ is finitely generated and by construction we have $h\circ \sigma\in H$ for all $h\in H$.  Furthermore, the rank of $H/(H\cap \mathbb{C}^*)$ is at least ${\rm dim}(X)+1$, since its rank is at least as large as the rank of $G$.  Lemma \ref{lem: bound1} gives a contradiction.  The result follows.
 \end{proof}
We note that for any complex variety $X$ with automorphism $\sigma$, there is some finitely generated extension $K$ of $\mathbb{Q}$ such that $X$ is defined over $K$ and such that $\sigma \in {\rm Aut}_K(X)$ and so Theorem \ref{thm: bound} can be applied using the value of $N(X,K)$ given in the statement of the theorem.

Also as a corollary of Theorem~\ref{thm: bound} we can prove that for any quasiprojective variety $X$ defined over $\Qbar$ under the action of an automorphism $\Phi$ which does not preserve a rational fibration,  there exist non-periodic codimension-$1$ subvarieties (defined over $\Qbar$). Indeed,  using Theorem~\ref{thm: bound}, there exist finitely many codimension-$1$ periodic subvarieties $Y_i$; in addition, let $N_1\in\mathbb{N}$ such that each $Y_i$ is fixed by $\Phi^{N_1}$. So we can find an algebraic point $x\in X(\Qbar)$ which is not contained in the above finitely many codimension-$1$ subvarieties $Y_i$. Then we simply take $Y$ be the intersection of $X$ (inside some projective space) with a hyperplane (defined over $\Qbar$) passing through $x$, but not containing $\Phi^{N_1}(x)$; then $Y$ is not periodic (since if it were, then it would be fixed by $\Phi^{N_1}$ but on the other hand, $\Phi^{N_1}(x)\notin Y(\Qbar)$), and therefore its orbit under $\Phi$ is Zariski dense in $X$.


\section{Subvarieties with Zariski dense orbits}
\label{Zhang surfaces}

The setup for this Section is as follows: $X$ is a quasiprojective variety defined over $\C$, and $\Phi$ is an automorphism of $X$ which preserves no nonconstant rational fibration. Our goal is to prove Theorem~\ref{Zhang's conjecture for automorphisms}; we use Theorems~\ref{key result for subvarieties} and \ref{thm: bound}.

\begin{proof}[Proof of Theorem~\ref{Zhang's conjecture for automorphisms}.]
Arguing as before, for a suitable prime $p\ge 5$, we find a  $\Zp$-scheme $\cX$ such that 
\begin{enumerate}
\item[(i)] $X$ is the generic fiber of $\cX$, while the special fiber $\cXb$ of $\cX$ is a geometrically irreducible quasiprojective variety.
\item[(ii)] $\Phi$ extends to an automorphism of $\cX$.
\item[(iii)] there exists $x_0\in\cX(\Zp)$ such that its reduction $\overline{x_0}$ modulo $p$ is a smooth point of $\cXb$. 
\end{enumerate}  
Let $U_0:=\{x\in\cX(\Zp)\text{ : } \overline{x}=\overline{x_0}\}$ be the residue class of $x_0$ (since $\overline{x_0}$ is a smooth point on $\cXb$, then each $x\in U_0$ is also smooth on $\cX$). Furthermore, we identify each section in $U_0$ with its intersection with the generic fiber $X$. Using Theorem~\ref{key result for subvarieties}, there exists a positive integer $N_1$ such that each periodic subvariety $Y$ which contains a point from $U_0$ which is smooth also on $Y$ has period bounded above by $N_1$.

By Theorem~\ref{thm: bound} there exist at most finitely many
codimension-$1$ subvarieties which are fixed by $\Phi^{N_1}$. Let
$Y_1$ be the union of all these codimension-$1$ subvarieties. On the
other hand, by the definition of $N_1$, if $x\in U_0$ is (pre)periodic
then $\Phi^{N_1}(x)=x$. Because $\Phi$ has infinite order (since it
preserves no nonconstant rational fibration), the vanishing locus for
the equation $\Phi^{N_1}(x)=x$ is a proper subvariety $Y_0$ of
$\cX$. In conclusion, $Y_0\cup Y_1$ is a proper subvariety of $X$ and
therefore, there exists a Zariski dense set of  points $x\in
U_0\setminus (Y\cup Y_1)(\Z_p)$ (because $U_0$ is a $p$-adic manifold
of dimension larger than $\dim(Y_0\cup Y_1)$). Furthermore, we can choose
$x\in X(\Qbar)$ by Proposition~\ref{dense number fields}; finally note that $x$ is smooth since it is in $U_0$.

For each such point $x\in X(\Qbar)\cap U_0$ which is not contained in $Y_0\cup Y_1$, we can find a codimension-$2$ subvariety $Y$ (defined over $\Qbar$) whose orbit under $\Phi$ is Zariski dense in $X$. Indeed, we consider $X$ embedded into a large projective space $\bP^m$ and then intersect $X$ with two (generic) hyperplane sections $H_1$ and $H_2$ (defined over $\Qbar$) which pass through $x$, but not through $\Phi^{N_1}(x)$ (note that $\Phi^{N_1}(x)\ne x$ because $x\notin Y_0$). Furthermore, since $H_1$ and $H_2$ are generic sections passing through $x$, then $Y:=X\cap H_1\cap H_2$ is a codimension-$2$ irreducible  subvariety defined over $\Qbar$ and moreover $x\in Y$ is a smooth point. We claim that $Y$ is not periodic under $\Phi$. Otherwise since $Y$ intersects $U_0$ then it must be fixed by $\Phi^{N_1}$ (by Theorem~\ref{key result for subvarieties} and our choice for $N_1$). However, $x\in Y$, but $\Phi^{N_1}(x)\notin Y$, which shows that $Y$ is not fixed by $\Phi^{N_1}$, and thus $Y$ is not periodic under the action of $\Phi$. Let $Z$ be the Zariski closure of the orbit of $Y$ under the action of $\Phi$. Since $Y$ is not periodic under $\Phi$, then $\dim(Z)>\dim(Y)$. Now, if $\dim(Z)<\dim(X)$, then $Z$ is a codimension-$1$ subvariety, and in addition it is fixed by $\Phi^{N_1}$. Then it has to be contained in $Y_1$. However this is impossible since $x\in Z$ but $x\notin Y_1$. In conclusion, $Z=X$, as desired.
\end{proof}

In particular, if the codimension-$2$ subvariety $Y$ from the conclusion of Theorem~\ref{Zhang's conjecture for automorphisms} has the property that $Y(L)$ is Zariski dense in $Y$ (for some number field $L$ containing the field of definition for $\Phi$), then $X(L)$ is Zariski dense in $X$. So our Theorem~\ref{Zhang's conjecture for automorphisms} may be used to prove that certain varieties $X$ have a Zariski dense set of rational points, by reducing the problem to finding a potentially dense set of rational points on a codimension-$2$ subvariety $Y$ of $X$.

In the case that $\sigma: X\to X$ is defined over a number field $K$ and there is a point $x\in X(K)$ with dense orbit under $\sigma$, we obtain a much stronger upper  bound (that has no dependence on the number field) for the period of codimension-$1$ subvarieties of $X$ periodic under the automorphism.

\begin{proof}[Proof of Theorem~\ref{Vojta consequence}.]
  We extend $\sigma$ to a map $\cX' \lra \cX'$ where $\cX'$ is defined
  over the ring of integers $\fo_K$.  Let $R$ be the localization of
  $\fo_K$ away from all at the primes of bad reduction.  Then we obtain
  an automorphism of $R$-schemes $\sigma_0: \cX \lra \cX$.  Now, let
  $\cY$ be some projective closure for $\cX$; then $x$ meets $\cY
  \setminus \cX$ over at most finitely many finite primes, call this
  set $T$, and let $R'$ denote the localization of $R$ away from $T$.
  Let $Y$ be the generic fiber of $\cY$.

Let $W$ be an invariant subvariety of $X$.  Suppose that $W$ has at
least  $\dim X - 
  h^1(Y,\cO_Y) + \rho + 1$ geometric components, where $\rho$ is the
Picard number of $Y$ (the rank of its N\'eron-Severi group).  Then
clearly $x$ is not in $W$ so there is an at most finite set $T'$ of
primes at which $x$ meets $W$.   Let $S = T \cup T' \cup (\Spec \fo_K
\setminus \Spec R)$.  Then $x$ is $S$-integral relative to $W$ and,
since $\sigma^{-1}(W) = W$, we see that $\sigma^n(x)$ is $S$-integral relative
to $W$ for all $n$ (if $\sigma^n(x)$ met $W$ modulo a prime, then $x$
would meet $\sigma^{-n}(W)$ modulo that same prime).  But by a result of Vojta
\cite[Cor. 0.3]{V1}, this would mean that the orbit of $x$ was not
dense, since $W$ has at
least  $\dim X - 
  h^1(Y,\cO_Y) + \rho + 1$ geometric components, which gives
a contradiction.
\end{proof}


\section{Other questions}
\label{other questions}

Poonen \cite{Poonen-uniform} has proposed a variant of Morton-Silverman's uniform
boundendess conjecture, where the morphisms vary across a general
families of self-maps of varieties rather than just the universal
family of degree-$d$ self-maps $\bP^N \lra \bP^N$.  In Poonen's
set-up, some fibers may have infinitely many preperiodic points.
Although that cannot happen in the case of preperiodic points of
morphisms $\bP^n \lra \bP^n$ (because of Northcott's theorem),
morphism $\bP^n \lra \bP^n$ {\it can} have infinitely many positive
dimensional periodic subvarieties.  For example, if $f$ is a
homogeneous two-variable polynomial of degree $n$, then the morphism
$\bP^2 \lra \bP^2$ given by $[x:y:z] \mapsto [f(x,z): f(y,z): z^n]$
has infinitely many $f$-invariant curves of the form $[x z^{n^k-1}:
f^k(x,z): z^{n^k}]$, where $f^k$ is the homogenized $k$-th iterate of the dehomogenized one-variable polynomial $x\mapsto f(x,1)$.  On the other hand, it is possible that one may be able
to bound the periods of the $f$-periodic subvarieties in general.

To state our question, we will need a little terminology.  To be
clear, we will say that $V$ is a $K$-subvariety of $X$ if $V$ is a
geometrically irreducible subvariety of $X$ defined over $K$.   Since
so little is known about this question, we will ask it in slightly
less generality than Poonen uses.    Given a morphism $\Phi:X \lra X$ and a $K$-subvariety $V$
of $X$ such that $V$ is periodic under the action of $\Phi$, we define
$\Per_{\Phi}(V)$ to be the smallest $n$ such that $\Phi^n(V) \subseteq V$.  

\begin{question}\label{uq1} 
Let $\pi: \cF \lra S$ be a morphism of varieties defined over a number
field $K$ and let $\Phi: \cF \lra \cF$ be an $S$-morphism.  For $s \in
S(K)$, we let $\cF_s$ be the fiber $\phi^{-1}(s)$ and let $\Phi_s$ be
the restriction of $\Phi$ to $\cF_s$.  Is there a constant $N_\cF$
such that for any $s \in S(K)$ and any periodic $K$-subvariety $V$ of
$\cF_s$, we have $\Per_{\Phi_s}(V) \leq N_\cF$?  

\end{question}

Even in the case where one can assign canonical heights to
subvarieties of $X$, there may be subvarieties of $X$ of positive
dimension having canonical height 0 that are not preperiodic
(see \cite{MM}).  Thus, we do not even know the answer to Question
\ref{uq1} even in the case of a constant family of maps.

\begin{question}\label{uq2}
  Let $\Phi: X \lra X$ be a morphism of varieties defined over a number
  field $K$.  Is there a constant $N_X$ such that for any periodic
  $K$-subvariety $V$ of $X$, we have $\Per_\Phi(V) \leq N_X$?   
\end{question}

We may also ask an analog of Question \ref{uq1} for finite subgroups
of automorphism groups.   

\begin{question}\label{bq1}
Let $\pi: \cF \lra S$ be a morphism of varieties defined over a number
field $K$.  For $s \in S(K)$, we let $\cF_s$ denote the fiber
$\pi^{-1}(s)$.  
Must the set
\[ \{ n \; | \; \text{ there is an $s \in S(K)$ such that $\Aut(\cF_s)$ has a subgroup of order $n$}
\} \] 
be finite?
\end{question}

The theorems of Mazur \cite{Mazur} and Merel \cite{Merel} show that
Questions \ref{uq1} and  \ref{bq1} has a
positive answer when $\cX$ is a family of elliptic curves.  Similarly,
work of Kond{\=o} \cite{Kondo} shows that Question \ref{bq1} has a
positive answer when $\cF$ is a
family of K3 surfaces.  

As with Question \ref{uq1}, we do not know the answer to Question
\ref{bq1} even in the constant family case.  On the other hand,
the bound in Theorem \ref{the key result} depends only on the
dimension of $X$ and the number of points in the special fiber of $X$
at the place $v$; by the Weil bounds of Deligne \cite{Weil1, Weil2},
the number of points on this special fiber can be bounded in terms of
$\#k_v$, the dimension of $X$, and the Betti numbers of $X$.  Thus,
one might expect that there is a bound on the the largest finite
subgroup of $\Aut(\cF_s)$ having good reduction at $v$ as $\cF_s$
varies in a family.

\bibliographystyle{amsalpha}
\bibliography{Drinbib}

\end{document}